\newcommand{\cA}{\mathcal{A}}
\newcommand{\cB}{\mathcal{B}}
 \newcommand{\diag}{{\rm diag}}
\newcommand{\RR}{{\mathbb R}}
\newcommand{\NN}{{\mathbb N}}
\newcommand{\ZZ}{{\mathbb Z}}
\newcommand{\CC }{{\mathbb C}}
\def\eps{\varepsilon}
\def\D{\partial }
\newcommand{\Id}{{\rm Id }}
\newcommand{\im}{{\rm Im }\, }
\newcommand{\re}{{\rm Re }\, }
\newcommand{\mez}{ \frac{1}{2} }
\newtheorem{theo}{Theorem}[section]
\newtheorem{prop}[theo]{Proposition}
\newtheorem{lem}[theo]{Lemma}
\newtheorem{defi}[theo]{Definition}
\newtheorem{ass}[theo]{Assumption}
\newtheorem{exam}[theo]{Example}
\newtheorem{rem}[theo]{Remark}
\numberwithin{equation}{section}
\title{{\bf Dispersive Stabilization }}
\author{\sc \small Guy M\'etivier\thanks{
IMB, Universit\'e de Bordeaux,
33405 Talence Cedex, France; metivier@math.u-bordeaux.fr.},  
\quad 
\sc \small Jeffrey Rauch\thanks{University of Michigan, Ann Arbor 48109 MI, USA  ; rauch@umich.edu;  Research partially supported by NSF under grant NSF DMS 0405899} 
}
\date{}
\begin{document}


\maketitle

\begin{abstract}
Ill posed linear and nonlinear initial value problems may  be stabilized, that it converted to 
to well posed initial value problems, by the addition of purely nonscalar linear dispersive terms. 
This is a stability analog of the Turing instability. This idea applies to systems of quasilinear Schr\"odinger equations from nonlinear optics.

\end{abstract} 


\section{Introduction}
 In nonlinear optics, one commonly encounters 
 coupled 
 systems of scalar Schr\"odinger equations
 \begin{equation}
 \label{eqa1.1}
  \D_t  u_j + i  \lambda_j \Delta_x u_j = \sum_{k=1}^N b_{j, k} (u,  \D_x) u_k  , \quad j \in \{ 1, \ldots, N \}, 
  \quad (t,x)\in \RR^{1+d},
 \end{equation}
where the $\lambda_j$ are real and the $b_{j, k}$ are first order
partial differential operators with coefficients depending smoothly
on $u$
(see \cite{Co} and the references therein). 
The nonlinear terms usually
depend on $u$ and $\overline u$,
 \begin{equation}
 \label{eqa1.2}
  \D_t  u_j + i  \lambda_j \Delta_x u_j = \sum_{k=1}^N c_{j, k} (u,  \D_x) u_k   
  +  d_{j, k} (u,  \D_x) \overline u_k ,
 \end{equation}
 where the  $c_{j,k}$ and $d_{j,k}$   are first order in $\D_x$. 
Introducing $u$ and $\overline u$ as  unknowns reduces to 
the form \eqref{eqa1.1} for a doubled real system. 

For the local in time
existence of smooth solutions, the easy case   is when the 
first order part, $B (u, \D_x) u$ on  the right hand side is symmetric.
In this symmetric
case there are easy $L^2$ estimates, followed by $H^s$ estimates  obtained
by commutations, which imply the local well-posedness of the Cauchy problem for
 \eqref{eqa1.1} in Sobolev spaces $H^s(\RR^d)$ for $s > 1 + \frac{d}{2}$.

In many applications, $B (u, \D_x)$ \emph{is not symmetric} and even more
$\D_t - B(u, \D_x)$ is not hyperbolic and   the Cauchy 
problem for  $\D_t u- B(u, \D_x) u = 0$ can be as  ill posed
as the Cauchy  problem for the Laplacian. 
However, the Cauchy problem
for \eqref{eqa1.1} may be well posed even if it is ill posed for the first order part.  This is so even though the dispersive terms
$i\lambda_j\Delta$ are not at all dissipative.  We call this
phenomenon 
{\it dispersive stabilization}. 

\vskip.2cm

\begin{exam}  
\label{ex:scrod}
With $x\in \RR$ the Cauchy 
problem for the system,
$$
\D_t u\ +\
 i \frac{\partial^2u}{\partial x^2} 
 \  +\
  \D_{x} v = 0, 
\qquad
\D_t v
\ -\
 i   \frac{\partial^2u}{\partial x^2} 
 \ -\
   \D_{x} u = 0,
$$
is well posed in $H^s$ even though the 
first order part defines a badly ill posed 
initial value problem.
\textup{This is  proved  by Fourier  transformation in 
$x$.  The amplification matrix is
$$
\exp\,
t\,\left(
\begin{matrix}i\xi^2 &-i\xi\cr
i\xi & -i\xi^2
\end{matrix}
\right)
$$
 For large $\xi$ the matrix has purely imaginary eigenvalues
 close to $\pm i\xi^2$ and is uniformly diagonalisable showing
 that the amplification matrix is uniformly  bounded for  $\xi\in \RR$
 and $t$ belonging to compact sets.   The bound grows exponentially
 in time.  The growth comes from $|\xi|\le R$.}
\end{exam}

The fact that the addition of a term ${\rm diag}\,(i\partial_x^2,-i\partial_x^2)$
whose evolution is neutrally stable can stabilize a stongly ill posed
Cauchy problem is not intuitively clear.  There are many related
results of this sort.  The simplest is the following assertion about
linear constant coefficient ordinary differential equaitons
in the plane.

\begin{exam}
If $A$ and $B$ are $2\times 2$ real matrices, knowing the 
stability origin as 
equilibrium of 
$$
X^\prime = A\,X,
\qquad
{\rm and},
\qquad
X^\prime = B\, X,
$$
one can draw no conclusion about the stability of 
 the equilibrium
 $X^\prime = (A+B) X$.
 \textup{The best know is the {\it Turing instability}
 \cite{tu}
 for which $A$ and $B$ have eigenvalues with strictly negative
 real part so the input dynamics are exponentially stable
 and the sum dynamics can be unstable.  
 Each of the stable dynamics is dissipative for
 certain scalar products.  When the scalar
 products are different the Turing instability is
 possible.
 One but not both
 of the matrices $A,B$ can be symmetric.  }
\end{exam}

\noindent
A related example is the two dimensional wave equation.

\begin{exam}
For the system version of the $2-d$ wave equation,
$$
u_{t} 
\ +\ 
\begin{pmatrix}
1&0\\-0&-1
\end{pmatrix}
u_x
\ +\
\begin{pmatrix}
0&1\\1&0
\end{pmatrix}
u_y
\ =\
0
$$
each of the split dynamics
$$
u_t  
\ +\ 
\begin{pmatrix}
1&0\\0&-1
\end{pmatrix}
u_x
\ =\ 0,
\qquad
u_t
\ +\
\begin{pmatrix}
0&1\\1&0
\end{pmatrix}
u_y
\ =\
0
$$
defines a bounded semigroup on 
$L^\infty(\RR^2)$.  The first
(resp. second) conserves
$$
\|u_1\|_{L^\infty},
\ \ 
{\rm and}
\ \ 
\|u_2\|_{L^\infty},
\qquad
\Big({\rm resp.}
\quad
\|u_1+u_2\|_{L^\infty},
\ \ 
{\rm and}
\ \ 
\|u_1-u_2\|_{L^\infty}
\Big)\,.
$$
The sum defines a dynamics so that the
map
$$
u(0,x,y)
\quad
\mapsto
\quad
u(t,x,y)
$$
is unbounded on $L^\infty(\RR^2)$ for all $t\ne 0$.
\end{exam}

 This analysis in this paper resembles example \ref{ex:scrod}.
 We do not use the local smoothing properties of 
 Schr\"odinger equations.
Instead, the 
Fourier  transform method
is extended using 
the paradifferential  
calculus. The idea is 
to conjugate $i A - B$ 
by a change of variable $I+V$
with $V$ of order $-1$ to a 
\emph{normal  form} 
\begin{equation}
\label{a1.3}
 (\Id  + V) (i A  -   B) (\Id + V)^{-1}  =   i A - \widetilde B    
\end{equation}
up to zero-th order terms, with $\widetilde B = i [V, A] - B $ symmetric. 
 The conjugation \eqref{a1.3} means that 
 the principal symbols satisfy 
\begin{equation}
\label{a1.4}
\sigma_{\widetilde B } =  \sigma_ B  + i  [ \sigma_A, \sigma_ V  ]      . 
\end{equation}
Equivalently,  the energy estimates are obtained using 
 the  \emph{pseudodifferential symmetrizers} 
\begin{equation}
\label{a1.5}
S  = \Id  +  V^*  + V 
\end{equation}
If the $\lambda_j$ are pairwise distinct, one can reduce $B$ to its 
diagonal part to prove the following result.

\begin{theo}
\label{tha1}  If the $\lambda_j$ are real and pairwise distinct and if the diagonal terms
$b_{j, j} (u, \D_x)$ have real coeficients,   then locally in time,
the Cauchy problem for
 \eqref{eqa1.1} is well posed in 
 the Sobolev spaces $H^s(\RR^d)$ for $s > 1 + \frac{d}{2}$. 
\end{theo}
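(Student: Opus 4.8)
The plan is to derive an a priori $H^s$ energy estimate for the paralinearised equation and then close a standard quasilinear iteration. Writing the system as $\D_t u + (iA - B)u = 0$ with $A = \diag(\lambda_1\Delta,\dots,\lambda_N\Delta)$ a fixed self-adjoint Fourier multiplier and $B = B(u,\D_x)$ the matrix of the first order operators $b_{j,k}(u,\D_x)$, the only obstruction to an $L^2$ estimate is the Hermitian part of the principal symbol of $B$, since $-iA$ is skew-adjoint and contributes nothing to $\frac{d}{dt}\|u\|_{L^2}^2$. First I would replace the $u$-dependent coefficients of $B$ by Bony paradifferential operators: for $s > 1 + \frac{d}{2}$ one has $u,\na u\in L^\infty$, so the paralinearisation remainder is bounded on $H^s$ and the task reduces to estimating $\D_t u + (iA - T_{\sigma_B})u = R$, with $R$ controlled and $\sigma_B(u;x,\xi)$ a matrix symbol of order one.

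The heart of the argument is the construction of a pseudodifferential symmetrizer $S = \Id + V + V^*$, with $V = T_{\chi\sigma_V}$ of order $-1$, for which $SP + P^*S$ is bounded on $L^2$, where $P = iA - B$. Since $A$ is self-adjoint one finds $SP + P^*S = i[S,A] - (SB + B^*S)$, and because $V$ is of order $-1$ the products $(V+V^*)B$, $B^*(V+V^*)$ are of order zero; thus at order one only $i[V+V^*,A]$ and $-(B+B^*)$ survive. Matching principal symbols gives the symbolic equation $i[\,\sigma_V+\sigma_V^*,\ \sigma_A\,] = \sigma_B + \sigma_B^*$. Since $\sigma_A = \diag(-\lambda_j|\xi|^2)$ is diagonal, the commutator acts entrywise by the scalar factor $\sigma_{A,j}-\sigma_{A,k} = (\lambda_k - \lambda_j)|\xi|^2$, so it is purely off-diagonal and can match any off-diagonal target; crucially, for $j\ne k$ this factor is invertible \emph{exactly because the $\lambda_j$ are pairwise distinct}, which is what makes $\sigma_V$ a symbol of order $1 - 2 = -1$. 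The diagonal of $\sigma_B + \sigma_B^*$ is $2\,\re\,\sigma_{b_{j,j}}$, which vanishes at order one \emph{precisely because the $b_{j,j}$ have real coefficients}, so nothing remains to cancel on the diagonal. This is the step where both hypotheses are used, and it is where I expect the real work to lie.

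To make this rigorous I would cut $\sigma_V$ off to $|\xi|\ge R$ by a symbol $\chi$, since $\sigma_V$ carries a factor $|\xi|^{-2}$; the low frequencies $|\xi|\le R$ contribute a bounded operator of norm $\ls R^2$, giving at worst exponential-in-time growth that is harmless locally, consistent with the remark that in Example \ref{ex:scrod} the growth comes from $|\xi|\le R$. Taking $R$ large makes $\|V\|_{L^2\to L^2}\le\mez$, so $S\ge\mez\,\Id$ is self-adjoint and positive and $(Su,u)\simeq\|u\|_{L^2}^2$. The paradifferential calculus (composition, adjoints, and the commutator with the constant-coefficient $A$) then upgrades the symbolic identity to the operator statement that $SP+P^*S$ is bounded on $L^2$ with norm $\ls\|u\|_{W^{1,\infty}}\ls\|u\|_{H^s}$. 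Differentiating $(Su,u)$ in time and applying Gronwall yields the $L^2$ estimate; applying the same to $\la D\ra^s u$, using that $A$ commutes with $\la D\ra^s$ while the variable-coefficient commutators $[\la D\ra^s,B]$ and $[\la D\ra^s,S]$ obey tame estimates, gives $\frac{d}{dt}\|u\|_{H^s}^2 \ls C(\|u\|_{H^s})\,\|u\|_{H^s}^2$.

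With this a priori bound I would run the usual scheme: define $u^{n+1}$ by the linear equation with coefficients frozen at $u^n$, solvable on a common interval $[0,T]$ with $T$ and the bounds depending only on $\|u_0\|_{H^s}$ via the same symmetrizer (now with $n$-dependent but uniformly controlled symbols); show that $(u^n)$ is Cauchy in $L^2$ by an energy estimate on the differences, and upgrade to convergence in $C([0,T];H^s)$ by interpolation together with a Bona--Smith regularisation argument. Uniqueness follows from an $L^2$ estimate on the difference of two solutions using the same symmetrizer. The principal obstacle throughout remains the symmetrizer construction and the bookkeeping of paradifferential remainders, in particular verifying that the order-one cancellation is exact and that $S$ stays uniformly positive along the iteration.
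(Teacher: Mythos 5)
Your plan coincides with the paper's proof in all essentials: paralinearize the first-order term, build a paradifferential symmetrizer of the form \eqref{a1.5}, $S=\Id+V+V^*$ with $V$ of order $-1$, by solving the commutator equation $i[\sigma_V+\sigma_V^*,\sigma_A]=\sigma_B+\sigma_B^*$ (this is Lemma~\ref{lem32}, where pairwise distinctness of the $\lambda_j$ makes the off-diagonal division produce a symbol of order $-1$ and reality of the coefficients of $b_{j,j}$ kills the diagonal obstruction), cut off low frequencies, run the energy estimate, and close a frozen-coefficient iteration. The differences of implementation (your large-$R$ smallness argument for positivity of $S$ versus the paper's added term $\gamma(1-\Delta_x)^{-1}$ in \eqref{3.12}; Bona--Smith regularization versus the paper's weak-limit argument plus Proposition~\ref{prop4.2}) are immaterial.

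There is, however, one genuine gap, and it is exactly the point the paper flags as requiring a new ingredient. In your Gronwall step you must bound $\big(\,[\D_t,S]\,u,u\big)_{L^2}$, i.e. show that the operator whose symbol is $\D_t\sigma_V$ is bounded on $L^2$. The symbol $\sigma_V$ depends on the frozen coefficient $a$ (an iterate $u^n$), and from the equation one only gets $\D_t a\in H^{s-2}$, because $\D_t$ carries the weight of \emph{two} spatial derivatives. At the claimed threshold $s>1+\frac d2$ this is not contained in $L^\infty$ (that would require $s>2+\frac d2$), so the standard calculus --- which needs symbols with bounded, indeed Lipschitz, coefficients --- does not apply to $\D_t\sigma_V$, and your argument as written only closes for $s>2+\frac d2$. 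The paper repairs this with an extension of the calculus (Definition~\ref{def31b}, Lemma~\ref{lememb}, Proposition~\ref{commdt}): a symbol of order $\mu$ whose coefficients lie merely in the Besov space $B^{-1,\infty}_\infty$ defines an operator of order $\mu+1$, and $H^{s-2}\hookrightarrow B^{-1,\infty}_\infty$ exactly when $s-2>\frac d2-1$, i.e. under the stated hypothesis. Applied with $\mu=-1$ this makes $[\D_t,T_V]$ bounded on $L^2$, which is estimate \eqref{3.15}. You need this lemma, or an equivalent device, to reach $s>1+\frac d2$.
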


\noindent
An analogous result 
for the systems \eqref{eqa1.2} is the following.

\begin{theo}
\label{tha1b}  Suppose that

-  the  $\lambda_j$ are real and pairwise distinct

- the diagonal terms  $b_{j, j} (u, \D_x)$ have real coeficients, 

- $c_{j, k}(u, \D_x)  = c_{k, j}(u, \D_x)$ for all pairs $(j,k)$ such that 
$\lambda_j +\lambda_k =0$.

\noindent  Then locally in time, the Cauchy problem for
 \eqref{eqa1.2} is well posed in the
 Sobolev spaces $H^s(\RR^d)$ with  $s > 1 + \frac{d}{2}$. 
\end{theo}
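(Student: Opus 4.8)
The plan is to reduce \eqref{eqa1.2} to a real doubled system of the form \eqref{eqa1.1} and then to run the normal-form/symmetrizer argument underlying Theorem~\ref{tha1}, taking care at the frequencies where the doubled dispersion coefficients collide. Setting $U=(u_1,\dots,u_N,\overline u_1,\dots,\overline u_N)$ and conjugating the $j$-th equation of \eqref{eqa1.2}, I would rewrite the system as $\D_t U+i\,\Lambda\,\Delta_x U=\cB(U,\D_x)U$ with $\Lambda=\diag(\lambda_1,\dots,\lambda_N,-\lambda_1,\dots,-\lambda_N)$ and
\[
\cB \ = \ \begin{pmatrix} (c_{j,k}) & (d_{j,k}) \\ (\overline{d_{j,k}}) & (\overline{c_{j,k}}) \end{pmatrix},
\]
the bars denoting conjugation of the coefficients. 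Thus \eqref{eqa1.2} has exactly the form \eqref{eqa1.1} for a system of size $2N$, with real dispersion coefficients $\pm\lambda_j$ and with the reality constraint $U_{N+j}=\overline{U_j}$ built in.

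Were the $2N$ numbers $\pm\lambda_j$ pairwise distinct, Theorem~\ref{tha1} would apply verbatim, the diagonal of $\cB$ being of real-coefficient type by the second hypothesis. Distinctness can fail only through $\lambda_j=-\lambda_k$, that is $\lambda_j+\lambda_k=0$, in which case the component $u_j$, of dispersion $\lambda_j$, and the component $\overline u_k$, of dispersion $-\lambda_k=\lambda_j$, carry the same Schr\"odinger phase. The next step is therefore to revisit the homological equation \eqref{a1.4} for the doubled symbol. Writing $\sigma_\cA=\diag(\mu_p|\xi|^2)$ with $\mu_p\in\{\pm\lambda_j\}$, the $(p,q)$ entry of $i[\sigma_\cA,\sigma_V]$ is $i(\mu_p-\mu_q)|\xi|^2(\sigma_V)_{pq}$, so one may choose $(\sigma_V)_{pq}$ of order $-1$ to cancel the skew-Hermitian part of every off-diagonal entry joining two components of \emph{distinct} dispersion, exactly as for Theorem~\ref{tha1}. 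The only entries left untouched are those with $\mu_p=\mu_q$: the diagonal, and the $2\times2$ blocks coupling $u_j$ with $\overline u_k$ when $\lambda_j+\lambda_k=0$ (each such block has size exactly two, since distinctness of the $\lambda_j$ forces the $\pm$ pairing to be unique).

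It then remains to verify that these resonant blocks are already symmetric, so that $\widetilde\cB=i[V,\cA]-\cB$ is symmetric modulo order zero. The diagonal entries are symmetric precisely because the diagonal operators have real coefficients (second hypothesis), and the off-diagonal part of each resonant block is made skew-Hermitian exactly by the coupling symmetry imposed at $\lambda_j+\lambda_k=0$ (third hypothesis); this is the sole place where that hypothesis enters. Once $\widetilde\cB$ is symmetric up to order zero, I would conclude as in Theorem~\ref{tha1}: the symmetrizer $S=\Id+V^*+V$ of \eqref{a1.5} is positive and elliptic of order zero, a paradifferential energy estimate with $S$ yields the $L^2$ bound, and commutation with $\la D_x\ra^s$ yields the $H^s$ bounds for $s>1+\tfrac{d}{2}$. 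The quasilinear dependence of the coefficients on $U$ is absorbed by the usual paralinearization and fixed-point iteration, and the block form of $\cB$ shows that the flow preserves the reality constraint $U_{N+j}=\overline{U_j}$, so the construction descends to a solution $u$ of \eqref{eqa1.2}.

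The main obstacle is exactly the loss of diagonalisation at the resonances $\lambda_j+\lambda_k=0$: there the commutator in \eqref{a1.4} degenerates and the normal form provides no freedom, so well-posedness must come entirely from the a priori symmetry of the resonant block. Checking that the third hypothesis is precisely the condition that renders that block skew-Hermitian---equivalently, that the resonant coupling contributes no net growth in the $S$-weighted $L^2$ estimate---is the crux of the argument; everything else runs parallel to the proof of Theorem~\ref{tha1} and to the symmetrizer construction \eqref{a1.3}--\eqref{a1.5}.
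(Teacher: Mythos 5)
Your route is the paper's own: double the unknowns to $U=(u,\overline u)$, so that \eqref{eqa1.2} becomes a system of the form \eqref{eq1.6} with dispersion $\diag(\lambda_1,\dots,\lambda_N,-\lambda_1,\dots,-\lambda_N)$; remove every non-resonant coupling by a normal form $V$ of order $-1$; check that the resonant blocks are Hermitian; then run the symmetrizer $S=\Id+V+V^*$ and the paradifferential energy estimates of Section~\ref{sec3}. This is exactly how the paper obtains Theorem~\ref{tha1b}, namely as the special case of Theorem~\ref{th2.8} under Assumption~\ref{ass3}. The gap is at the step you yourself call the crux, which you assert rather than verify: that the third hypothesis, $c_{j,k}=c_{k,j}$ when $\lambda_j+\lambda_k=0$, makes the resonant $u_j$--$\overline u_k$ block Hermitian. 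It cannot, because the $c$'s do not occur in that block. In your own doubled matrix $\cB$, the entry coupling $u_j$ to $\overline u_k$ is $d_{j,k}$, and the entry coupling $\overline u_k$ back to $u_j$ is $\overline{d_{k,j}}$ (conjugated coefficients), so the resonant block is
\begin{equation*}
\begin{pmatrix} c_{j,j}(\xi) & d_{j,k}(\xi)\\ \overline{d_{k,j}}(\xi) & \overline{c_{k,k}}(\xi)\end{pmatrix},
\end{equation*}
whose skew-Hermitian part vanishes iff $c_{j,j}$ and $c_{k,k}$ have real coefficients (the second hypothesis, the paper's $b_{j,j}$ meaning $c_{j,j}$) \emph{and} $d_{j,k}=d_{k,j}$. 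The off-diagonal coefficients $c_{j,k}$, $j\neq k$, couple $u_j$ to $u_k$, and those couplings are never resonant once the $\lambda_j$ are pairwise distinct; a symmetry hypothesis on them can therefore play no role anywhere in the proof --- a warning sign your write-up misses. The condition that actually closes the argument is transpose symmetry of the $d$'s, which is precisely what the paper's Assumption~\ref{ass3}, equation \eqref{2.9}, imposes on $C$, the coefficient matrix of $\overline u$ in \eqref{eq1.5}; the hypothesis on the $c$'s printed in Theorem~\ref{tha1b} is evidently a notational slip, which your proof inherits instead of detecting.

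The step does not merely lack detail; read literally, it is false, so no better argument can rescue it. Take $N=2$, $d=1$, $\lambda_1=1=-\lambda_2$, all $c_{j,k}=0$, $d_{1,1}=d_{2,2}=0$, $d_{1,2}=\D_x$, $d_{2,1}=-\D_x$: all three stated hypotheses hold, yet $w=(u_1,\overline u_2)$ satisfies $\D_t w_1+i\D_x^2 w_1=\D_x w_2$ and $\D_t w_2+i\D_x^2 w_2=-\D_x w_1$, whose amplification matrix has eigenvalues $i\xi^2\pm|\xi|$, so the Cauchy problem is ill posed in every $H^s$. With the corrected hypothesis $d_{j,k}=d_{k,j}$ when $\lambda_j+\lambda_k=0$, your outline is sound and coincides with the paper's proof: the normal form of Lemma~\ref{lem32} removes all non-resonant entries, the resonant blocks above are Hermitian by hypothesis, and the symmetrizer argument gives the $L^2$ and $H^s$ estimates, with the reality constraint preserved as you say. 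Two smaller repairs: you want the skew-Hermitian part of each resonant block to \emph{vanish}, not its off-diagonal part to be ``made skew-Hermitian''; and a resonant pair need not have $j\neq k$ --- if some $\lambda_j=0$ then $u_j$ resonates with $\overline u_j$, harmlessly, since $d_{j,j}=d_{j,j}$ is automatic.
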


 In the next section we give a more general statement which allows for
more general nondiagonal second order terms.
In particular the $\lambda_j \Delta_x$  can be replaced by different second order elliptic operators
$A_j (\D_x)$. 
The idea of using pseudodifferential symmetrizers is 
related to  the proof in \cite{Co} where the symmetry is obtained after 
differentiation of the equations and clever linear recombination.
This amounts 
to using differential symmetrizers. 
Our  analysis    is 
a  systematic exploration of the idea. 
Because of the quasilinear
character of the equations, we use the paradifferential calculus
in place of the classical pseudodifferential version.   The latter
would have sufficed to treat semilinear analogues.
The paradifferential methods
can also be used to treat the 
stongly nonlinear case  $F (u, \D_xu)$
since such a term is reduced to a quasilinear
term using the paralinearization,
see Section \ref{sec:statement}.

For the systems case the dispersive
terms rotating at different speeds
regularize an explosive first order term.
For the scalar case, that is $N = 1$, 
such a stabilisation is not possible.
The  Cauchy problem for 
$\D_t - i \Delta_x  + i  \D_{x_1}$  is ill posed.
However, 
 if $\im b(x)$ satisfies suitable decay assumptions at infinity, 
then the Cauchy problem for  $\D_t - i \Delta_x  +  b(x) \cdot \nabla_x $  is well 
posed (see \cite{Mizohata}).  
Intuitively, the waves propagate to the regions where
$b$ is small and are no longer amplified.  The proofs 
use the dispersive and  local smoothing  properties 
of Schr\"odinger equations. This idea has been extensively studied.
Some of the foundational papers are
\cite{Mizohata2}, \cite{kato},
\cite{cs}, \cite{KPV},  \cite{KPV2}, and,
 references therein. 
It would be natural to combine such ideas with those
of dispersive stabilization with the goal of extending
the local existence to the case where the antisymmetric
part of 
$\widetilde B$ has suitable decay at infinity 
rather than requiring that it vanish.
We do not pursue this line of inquiry.


\section{Statement of the result}
\label{sec:statement}
Consider the general equations,
\begin{equation}
 \label{eq1.1}
  \D_t u    +  i  A (\D_x) u   +   B(t, x,  u, \D_x ) u  =  0  ,
\end{equation}
with  $A$ second order and $B$ first order,
\begin{equation}
A(\D_x) = \sum_{j, k= 1}^d   A_{j,k} \D_{x_j} \D_{x_k}, 
\end{equation}
\begin{equation}
B(t, x, u,  \D_x) = \sum_{j =  1}^d   B_j (t, x, u)  \D_{x_j}.  
\end{equation}
The matrices  $ B_j (t, x, u) $   are assumed to be $C^\infty$ functions of  
$ (t, x, \re u, \im  u)$, 
so that for each $\alpha$ and bouded $K\subset\CC^N$,
$$
\partial_{t,x, \re u, \im  u}^\alpha B\ \in\
L^\infty([0, T] \times \RR^d \times K).
$$

\noindent
With the example \eqref{eqa1.1} in mind, we assume that $A$ is smoothly block-diagonalizable: 

\begin{ass}
\label{ass1}
 For all  $\xi \in \RR^n\backslash\{0\} $,   $A (\xi) = \sum A_{j,k} \xi_j \xi_k$  is  self-adjoint. 
 Moreover, there are smooth real eigenvalues  $\lambda_p(\xi)$  and smooth
 self-adjoint  eigenprojectors $\Pi_p (\xi)$ such that 
 $$
 A(\xi)  = \sum_p \lambda_p(\xi ) \Pi_p (\xi) . 
 $$
\end{ass}

 This assumption is satisfied if $A$ is self-adjoint with   eigenvalues of  constant multiplicity. The assumption allows for regular crossing of eigenvalues. 
The  conditions on $B$ involve,
$$
\im B \ :=\
 \frac{1}{2 i} (B - B^*)  .
$$

\begin{ass}
\label{ass2}    For all $p  $ and  $ q$, 
$$
\lambda_p(\xi)= \lambda_q (\xi)
\quad
\Longrightarrow
\quad
\Pi_p (\xi) \,\im  B(t, x, u, \xi)\, \Pi_q (\xi) =0\,.
$$
In addition,  there are smooth matrix valued functions 
$V_{p, q} (t, x, u, \xi)$ so that 
\begin{equation}
\label{2.4} 
\Pi_p (\xi)\, \im  B(t, x, u, \xi)\, \Pi_q (\xi) 
\  =\
   \big( \lambda_p(\xi) -  \lambda_q (\xi)\big) V_{p, q} (t, x, u, \xi). 
\end{equation}  
\end{ass}

\begin{rem}
\label{rem11}  
\textup{The condition \eqref{2.4} is 
automatic where $\lambda_p(\xi) \ne \lambda_q(\xi)$, it defines 
$V_{p, q}$.  Assumption \eqref{ass2} contains two types of information.}
 
\quad $\bullet$  \textup{For any $\underline \xi$, if $\underline \lambda$ is an eigenvalue of 
$A(\underline \xi)$ and $\Pi(\underline \xi)$  the spectral projector, then 
 $\Pi (\underline \xi)  B (t, x, u, \underline \xi)  \Pi(\underline \xi)$ is self adjoint.  
 If the eigenvalue remains of constant multiplicity 
 for $\xi$ near $\underline \xi$, nothing more needs to be added for this polarization. In particular,  
if all the distinct eigenvalues $\lambda_p(\xi)$  of $A(\xi) $ have constant multiplicity, 
the   Assumption~ \ref{ass2}  reduces to the condition that  the matrices 
 $\Pi_p (\xi)  B(t, x, u, \xi) \Pi_p (\xi)$ are  self-adjoint.}
 
 \quad $\bullet$  \textup{If the eigenvalue $\underline \lambda $ splits into several eigenvalues
$ \lambda_p(\xi)$   for $\xi$ near $\underline \xi$, the condition~\eqref{2.4} means that 
not only $\Pi_p (\xi) \im  B(t, x, u, \xi) \Pi_q (\xi)  $ vanishes at $\underline \xi$ 
and on the variety $\{\lambda_p = \lambda_q\}$,  but also 
that  $  \lambda_p(\xi) -  \lambda_q (\xi) $ is a divisor. 
 In particular, if $\widetilde \Pi (\xi)$ denotes the spectral projector on the invariant space associated 
to the eigenvalues close to $\underline \lambda $, this condition is 
locally satisfied 
with $V_{p,q}=0$ 
whenever $\widetilde \Pi (  \xi)\,  B (t, x, u,  \xi)\,  \widetilde\Pi(  \xi)$ is self-adjoint. 
This is so since 
$$
0= \widetilde\Pi \,{\rm Im}\, B\,\widetilde\Pi =
\sum_{p,q} \Pi_p {\rm Im}\, B\Pi_q,
\quad
{\rm so},
\quad
\Pi_p \,{\rm Im}\, B\,\Pi_q = \Pi_p \,\widetilde\Pi\, {\rm Im}\, B\,\widetilde\Pi \,\Pi_q=0.
$$}
\end{rem}

\begin{rem}
  \textup{ There is no assumption on the spectrum of $B(t, x, u, \xi)$.
  In particular, 
$\D_t + B$ may be nonhyperbolic and thus strongly  unstable in Hadamard's sense. 
The dispersive term $A$ has a stabilizing effect, provided that the
condition in  Assumption~\ref{ass2} is satisfied. 
For this reason models of this type appear often in the descriptions
of instabilities, for example that of Raman.  The dispersive
stabilisaton regularizes  to 
a well posed causal model albeit with the possibility of 
growth for moderate wave numbers as in the example. }
\end{rem}

We show that under the Assumptions~\ref{ass1} and \ref{ass2} the Cauchy problem for 
\eqref{eq1.1} is well posed in $H^s$ for $s > \frac{d}{2} + 1$, locally in time.

\begin{theo}
\label{th2.5}  If  Assumptions~$\ref{ass1}$ and $\ref{ass2}$
hold,  $s > \frac{d}{2} + 1$,   and,  
$h \in H^{ s}(\RR^d)$,  there is $T > 0$ and a unique solution
$ u \in C^0([0, T]; H^s(\RR^d))$ of    \eqref{eq1.1} with 
$u|_{t = 0} = h$. 
\end{theo}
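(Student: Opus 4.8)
The plan is to run a paradifferential energy estimate built from the symmetrizer $S=\Id+T_V+T_V^*$ suggested by the normal form \eqref{a1.3}--\eqref{a1.5}, and then to close the quasilinear problem \eqref{eq1.1} by an iteration scheme whose linear step is controlled by that estimate. First I would paralinearize. For $u\in H^s$ with $s>\frac d2+1$ one has $u\in C^1$ and, by Moser estimates, the coefficients $B_j(t,x,u)$ lie in $H^s$; Bony's calculus then replaces $B(t,x,u,\D_x)u$ by the paradifferential operator $T_B u$ with symbol $B(t,x,u,\xi)$, up to a smoothing remainder tamely bounded by $\|u\|_{H^s}$. The Fourier multiplier $iA(\D_x)$ is kept as is; note its principal symbol $-iA(\xi)$ is skew-adjoint, so the dispersive part is neutrally stable and the sole obstruction to an $L^2$ estimate is the non-self-adjoint part of $B$, measured by $\im B$.

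The core step is the construction of $V$. Writing the generator as $G=iA(\D_x)+T_B$, the symbolic calculus gives that the order-two part of the symbol of $SG+G^*S$ vanishes because $A(\xi)$ is self-adjoint, while its order-one part is governed by the identity \eqref{a1.4}: one wants $\sigma_{\widetilde B}=\sigma_B+i[\sigma_A,\sigma_V]$ to be self-adjoint. Decomposing in the eigenbasis $A(\xi)=\sum_p\lambda_p(\xi)\Pi_p(\xi)$ and using $[\sigma_A,\sigma_V]_{pq}=(\lambda_p-\lambda_q)\,\Pi_p\sigma_V\Pi_q$, this requirement splits by blocks. On the diagonal blocks $\lambda_p=\lambda_q$ the commutator term vanishes, so one needs $\Pi_p\im B\,\Pi_q=0$, which is exactly the first half of Assumption~\ref{ass2}; off the diagonal the divisibility \eqref{2.4} lets me set $\Pi_p\sigma_V\Pi_q$ proportional to $V_{p,q}(t,x,u,\xi)$, a symbol of order $-1$ that is smooth across eigenvalue crossings. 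Taking the diagonal blocks of $V$ to be zero makes $V+V^*$ self-adjoint, so $S$ is self-adjoint, and since $V$ has order $-1$ one has $S\ge c\,\Id$ at high frequencies; after a harmless low-frequency truncation and Gårding's inequality, $\re\la Su,u\ra$ is equivalent to $\|u\|_{L^2}^2$. I expect this construction, together with the proof that $i[A,\sigma_V]$ cancels the entire principal part of $\im B$ within the paradifferential calculus, to be the main obstacle: everything hinges on \eqref{2.4}, without which $V$ would be singular at crossings and not a legitimate symbol of order $-1$.

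Granting the symmetrizer, the estimate follows. For the linear equation with coefficients frozen at a given function, $\frac{d}{dt}\re\la Su,u\ra=-\re\la(SG+G^*S)u,u\ra+\re\la(\D_tS)u,u\ra$, where $SG+G^*S$ is of order $\le0$ by the cancellations above and $\D_tS$ is controlled by using the equation to estimate $\D_t u$, the $u$-dependence of $V$ being tame for $s>\frac d2+1$. This yields $\frac{d}{dt}\|u\|_{L^2}^2\le C\,\|u\|_{L^2}^2$; conjugating by $\Lambda^s=\la D\ra^s$ and bounding the commutators $[\Lambda^s,T_B]$ tamely upgrades this to $\frac{d}{dt}\|u\|_{H^s}^2\le C(\|u\|_{H^s})\,\|u\|_{H^s}^2$, hence an a priori bound on a time interval depending only on $\|h\|_{H^s}$.

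Finally I would solve the quasilinear problem by the standard scheme. Define $u^{n+1}$ as the solution of the linear equation $\D_t u^{n+1}+iA(\D_x)u^{n+1}+B(t,x,u^n,\D_x)u^{n+1}=0$, each well posed by the symmetrizer built from $u^n$ (existence and uniqueness of $u^{n+1}\in C^0([0,T];H^s)$ via the energy estimate together with a Galerkin or duality argument). The a priori bound gives a uniform bound for the sequence in $C^0([0,T_0];H^s)$ on a common interval, and applying the same estimate to the difference $u^{n+1}-u^n$, whose source comes from $B(t,x,u^n,\D_x)-B(t,x,u^{n-1},\D_x)$, shows the sequence is Cauchy in the weaker norm $C^0([0,T_0];L^2)$. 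Passing to the limit and interpolating between the uniform $H^s$ bound and $L^2$ convergence produces a solution in $C^0([0,T];H^s)$; uniqueness follows from the same $L^2$ estimate for the difference of two solutions, and continuity in time from the usual strong-continuity argument for the symmetrized energy.
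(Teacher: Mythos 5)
Your proposal follows essentially the same route as the paper's proof: paralinearization of $B(t,x,u,\D_x)u$ into $T_Bu$, block-by-block construction of the order $-1$ symbol $V$ from the divisibility condition \eqref{2.4} so that $B+i[\sigma_A,\sigma_V]$ is self-adjoint, a symmetrizer $\Id+T_V+T_V^*$ made positive by a low-frequency correction (the paper's $\gamma(1-\Delta_x)^{-1}$ term), the resulting $L^2$ and tame $H^s$ estimates, and the identical iteration scheme with uniform high-norm bounds, contraction in $L^2$, and a strong-continuity argument to land in $C^0([0,T];H^s)$. The one point you pass over quickly — controlling $\la(\D_t S)u,u\ra$ when, via the equation, $\D_t u$ and hence $\D_t V$ have only $H^{s-2}\subset B^{-1,\infty}_\infty$ regularity — is precisely what the paper's time-dependent Besov extension of the calculus (Proposition~\ref{commdt}) is built to justify, and your claim there is correct as stated.
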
 

\begin{exam}[From \cite{Co}] 
  \textup{ $A$ is block diagonal  $ A = \diag \{ \lambda_p  \Id_p  \}  $ with  
real $\lambda_p(\xi)$  homogeneous of degree two and $\lambda_p (\xi) \ne \lambda_q(\xi)$
for $p \ne q$ and $\xi \ne 0$. The second assumption is trivially satisfied if the diagonal blocks
$B^{p,p}$ vanish.}
\end{exam}

For applications, it is interesting to  make explicit the 
assumptions when 
the first order part depends on $\overline u $,
\begin{equation}
 \label{eq1.5}
  \D_t u    +  i  A (\D_x) u   +   B(t, x, u, \D_x ) u  + 
 C(t, x,  u, \D_x ) \overline u  =  0  
\end{equation}
Introducing $ v = \overline u$ as a variable and setting 
$U = {}^t ( u,  v)$, the equation reads: 
\begin{equation}
 \label{eq1.6}
  \D_t  U     +  i  \cA (\D_x) U   +   \cB(t, x,  u , \D_x ) U   =  0  
\end{equation}
 with 
 \begin{equation}
 \cA  = \begin{pmatrix} A (\D_x) & 0 \\ \\0 & - A(\D_x)  \end{pmatrix}, 
 \qquad 
  \cB  = \begin{pmatrix} B  &  C  \\   \\
  \overline C  & \overline B  \end{pmatrix}.  
 \end{equation}
In this context, the Assumption~\ref{ass2} becomes the following.
\begin{ass}
\label{ass3}    For all $p  $ and  $ q$,  $\Pi_p (\xi) \im  B(t, x, u, \xi) \Pi_q (\xi) $ vanishes when 
$\lambda_p(\xi)= \lambda_q (\xi)$ and 
 $\Pi_p (\xi) \big( C(t, x, u, \xi) -  {}^t  C(t, x, u, \xi)  \big) \Pi_q (\xi) $ vanishes when 
$\lambda_p(\xi)+  \lambda_q (\xi) = 0 $. 
In addition, there are smooth matrices  
$V_{p, q} (t, x, u, \xi)$ and $W_{p, q} (t, x, u, \xi)$ such that 
\begin{eqnarray}
\label{2.8} 
\Pi_p  ( \im  B ) \Pi_q     =   ( \lambda_p  -  \lambda_q  ) V_{p, q} ,
\\ 
\label{2.9}
\Pi_p    \big( C  -  {}^t  C   \big) \Pi_q     =   ( \lambda_p + \lambda_q  ) W_{p, q}  .
\end{eqnarray} 
\end{ass}

\begin{theo} 
\label{th2.8} Under Assumptions~$\ref{ass1}$ and $\ref{ass3}$, for $s > \frac{d}{2} + 1$   and   
$h \in H^{ s}(\RR^d)$,  there is $T > 0$ and a unique solution
$ u \in C^0([0, T]; H^s(\RR^d))$ of    \eqref{eq1.5} with 
data $u|_{t = 0} = h$. 
\end{theo}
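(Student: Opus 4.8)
The plan is to realify the problem: introduce $v = \overline u$ as an independent unknown so that \eqref{eq1.5} becomes the system \eqref{eq1.6} for $U = {}^t(u,v)$ governed by $(\cA, \cB)$, and then deduce everything from Theorem~\ref{th2.5}. Concretely, I would first verify that $(\cA, \cB)$ satisfies Assumptions~\ref{ass1} and \ref{ass2}, then invoke Theorem~\ref{th2.5} to obtain a unique $U \in C^0([0,T];H^s)$ for the doubled data ${}^t(h,\overline h)$, and finally recover a solution of \eqref{eq1.5} by showing that the constraint $v = \overline u$ is propagated by the dynamics. The coefficients of $\cB$ are smooth functions of $(t,x,\re U, \im U)$ because the blocks $\overline B, \overline C$ are smooth in $u$, so the regularity hypothesis of Theorem~\ref{th2.5} is met.

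Since $\cA = \diag(A,-A)$ is block diagonal and $A$ is self-adjoint, $\cA(\xi)$ is self-adjoint with eigenvalues $\lambda_p(\xi)$ and $-\lambda_q(\xi)$ and smooth self-adjoint eigenprojectors $\mathcal{P}_p^+ = \diag(\Pi_p,0)$ and $\mathcal{P}_q^- = \diag(0,\Pi_q)$, which resolve the identity; this gives Assumption~\ref{ass1} for $\cA$, the regular crossings of $A$ now being supplemented by the crossings $\lambda_p = -\lambda_q$. For Assumption~\ref{ass2} I would compute $\im \cB = \frac1{2i}(\cB - \cB^*)$ blockwise, obtaining diagonal blocks $\im B$ and $-\overline{\im B}$ and off-diagonal blocks $\frac1{2i}(C - {}^tC)$ and its conjugate. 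Sandwiching by the projectors above yields four types of terms. The $(+,+)$ block is $\Pi_p(\im B)\Pi_{p'}$, handled directly by \eqref{2.8} with eigenvalue gap $\lambda_p - \lambda_{p'}$; the $(+,-)$ block is $\frac1{2i}\Pi_p(C - {}^tC)\Pi_q$, whose relevant crossing is $\lambda_p - (-\lambda_q) = \lambda_p + \lambda_q = 0$, handled by \eqref{2.9}. The $(-,+)$ block is the adjoint of the $(+,-)$ block, so its divisibility follows by taking adjoints, $\im\cB$ and the projectors all being self-adjoint; the $(-,-)$ block reduces, via conjugation and transposition, to a transpose of $\Pi_{q'}(\im B)\Pi_q$, again controlled by \eqref{2.8}. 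Thus the two halves of Assumption~\ref{ass3} land exactly on the diagonal and off-diagonal crossings of $\cA$, and the required $V$'s and $W$'s for $(\cA,\cB)$ are built from those of Assumption~\ref{ass3} and their conjugates.

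With Assumptions~\ref{ass1} and \ref{ass2} verified, Theorem~\ref{th2.5} produces, for $s > \frac d2 + 1$, a time $T>0$ and a unique $U = {}^t(u,v) \in C^0([0,T];H^s)$ solving \eqref{eq1.6} with $U|_{t=0} = {}^t(h,\overline h)$. It remains to see that $v = \overline u$. Conjugating the first block equation of \eqref{eq1.6} shows that $\overline u$ satisfies the same evolution equation as $v$ with the same initial value $\overline h$; the uniqueness in Theorem~\ref{th2.5} then forces $v = \overline u$, so that $u$ solves \eqref{eq1.5}, and uniqueness for \eqref{eq1.5} is inherited from uniqueness for \eqref{eq1.6}.

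I expect the main obstacle to be the blockwise verification of Assumption~\ref{ass2}, namely the bookkeeping that matches the conjugation structure of $\cB$ (the blocks $\overline B,\overline C$ and the conjugated symbols $\overline{\im B}$, $\overline{C - {}^tC}$) against the eigenprojector structure of $\cA$. The transpose identities that close up the $(-,-)$ block rely on $\overline{\Pi_q} = \Pi_q$, i.e. on the eigenprojectors of $A$ being real, as they are in all the motivating examples where $A = \diag(\lambda_j\Delta)$; the self-adjointness of $\im\cB$ then supplies the $(-,+)$ block for free. A secondary, but necessary, point is the propagation of the constraint $v = \overline u$, which rests essentially on the uniqueness statement in Theorem~\ref{th2.5}.
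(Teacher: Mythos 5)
Your overall route is exactly the paper's: the paper offers no separate argument for Theorem~\ref{th2.8} beyond introducing $v=\overline u$, rewriting \eqref{eq1.5} as the doubled system \eqref{eq1.6} with $\cA=\diag(A,-A)$ and the block matrix $\cB$, and observing that Assumption~\ref{ass3} is precisely Assumption~\ref{ass2} for $(\cA,\cB)$, after which Theorem~\ref{th2.5} applies. Your blockwise verification of Assumption~\ref{ass2} is correct: the $(+,+)$ crossings $\lambda_p=\lambda_{p'}$ are covered by \eqref{2.8}, the $(+,-)$ crossings $\lambda_p+\lambda_q=0$ by \eqref{2.9}, and the $(-,+)$ and $(-,-)$ blocks follow by self-adjointness of $\im\cB$ together with conjugation; your remark that this (and already the identity $\overline A=A$ implicit in writing $\cA=\diag(A,-A)$) requires $A$, hence the $\Pi_p$, to be real is a point the paper passes over silently.

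There is, however, a genuine gap in your constraint-propagation step as stated. Let $U={}^t(u,v)$ be the solution of the doubled system with data ${}^t(h,\overline h)$ and set $\widetilde U={}^t(\overline v,\overline u)$. Conjugating the two block equations shows that
\[
\D_t \widetilde U \ +\ i\,\cA(\D_x)\widetilde U \ +\ \cB\big(t,x,u,\D_x\big)\widetilde U \ =\ 0 ,
\]
where the coefficients are still evaluated at $u$, the first component of $U$ --- \emph{not} at $\overline v$, the first component of $\widetilde U$. Thus $\widetilde U$ is not known to solve the nonlinear equation \eqref{eq1.6}, and the uniqueness assertion of Theorem~\ref{th2.5} cannot be applied to the pair $U,\widetilde U$: invoking it is circular, since $\widetilde U$ solves the nonlinear system exactly when $u=\overline v$, which is what is to be proved. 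Relatedly, your one-line formulation (``$\overline u$ satisfies the same evolution equation as $v$'') is only true at the level of pairs: the conjugated first equation contains $\overline C(u)\,\overline v$ where the $v$-equation contains $\overline C(u)\,u$, so the comparison must be made between the vectors $U$ and $\widetilde U$, not componentwise. The repair is immediate and stays inside the paper: both $U$ and $\widetilde U$ solve the \emph{linear} problem \eqref{eq3.1} (in its doubled form, with coefficient $a=u$, which satisfies \eqref{3.2}) with the same data, so the $L^2$ estimate of Theorem~\ref{th3.1} forces $U=\widetilde U$, i.e. $v=\overline u$. Your final step is fine: uniqueness for \eqref{eq1.5} does follow from uniqueness for \eqref{eq1.6}, because for two solutions $u_1,u_2$ of \eqref{eq1.5} the doubled vectors ${}^t(u_i,\overline{u_i})$ genuinely satisfy the nonlinear system \eqref{eq1.6}, so there Theorem~\ref{th2.5} applies legitimately.
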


We briefly discuss the case of equations with fully nonlinear right hand side,
\begin{equation}
 \label{eq1.3}
  \D_t u    +  i  A (\D_x) u   +   F(t, x, u, \D_x  u)  =  0,    
\end{equation}
where $F(t, x, u, v_1, \ldots, v_d)$  is a smooth function of 
$(t, x, \re u, \im u)$ and  of $(\re v_1, \ldots, \im v_d)$.  
Our analysis relies 
on a paralinearization of the first order term, so that the analogues  
of    $B$ and $C$ are
\begin{eqnarray}
B (t, x, u, v, \xi ) = \sum_{j} \xi_j \frac{\D F}{\D v_j} (t, x, u, v) 
\\
C (t, x, u, v, \xi ) = \sum_{j} \xi_j \frac{\D F}{\D \overline v_j} (t, x, u,  v) 
\end{eqnarray}
with 
$$
\frac{\D  }{\D v_j} = \mez \frac{\D }{\D \re v_j} - \frac{i}{2} \frac{\D }{\D \im v_j}, 
\qquad 
\frac{\D  }{\D \overline v_j} = \mez \frac{\D }{\D \re v_j} + \frac{i}{2} \frac{\D }{\D \im v_j}
$$
as usual. The new condition is that \eqref{2.8} \eqref{2.9} are satisfied 
with smooth matrices $V_{p, q} (t, x, u, v) $ and  $W_{p, q} (t, x, u, v) $. 
In this case, the Cauchy problem is well posed in $H^s$ 
for $s > \frac{d}{2} + 2$.


\section{Basic $\mathbf  L^2$ estimate }
\label{sec3}

We solve \eqref{eq1.1}  by Picard iteration.  
Consider first the linear 
problem,
\begin{equation}
 \label{eq3.1}
  \D_t u    +  i  A (\D_x) u   +   B(t, x,  a, \D_x ) u  =  f   , \qquad  u_{| t = 0}  = h, 
\end{equation}
where 
\begin{equation}
\label{3.2} 
a \in C_w^0( [0, T]; H^s(\RR^d) ),  \quad  \D_t a \in C_w^0( [0, T]; H^{s-2}(\RR^d) ) 
\end{equation} 
with  $s > \frac{d}{2} + 1$ and $C^0_w([0, T]; H^\sigma)$ denotes the space of 
functions which are   continuous from $[0, T]$ to $H^\sigma$ equipped with the 
weak topology. 

\begin{theo}
\label{th3.1} 
There are functions $C_0$  and $C_1$    so
that the solution of \eqref{eq3.1} satisfies 
\begin{equation}
\label{3.3}
\big\| u  (t)   \big\|_{L^2}  \le  C_0( K_0) e^{ t C_1 (K_1)}
\Big(  \big\| u  (0)   \big\|_{L^2}   + 
\int_0^t   \big\| f  (t')   \big\|_{L^2}  dt'\Big) 
\end{equation}
with 
\begin{eqnarray}
\label{3.4}
&&K_0 := \| a \|_{L^\infty ([0, T]\times \RR^d) } ,
\\
\label{3.5}
&&K_1 : = \| a \|_{L^\infty ([0, T]; H^s( \RR^d) ) }  +  \| \D_t a \|_{L^\infty ([0, T]; H^{s-2}( \RR^d) ) }. 
\end{eqnarray}

\end{theo}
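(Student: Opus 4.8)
The plan is to prove \eqref{3.3} by an energy method built on the paradifferential symmetrizer announced in \eqref{a1.3}--\eqref{a1.5}. First I would replace the operator $B(t, x, a, \D_x)$ by a paradifferential operator $T_B$ with symbol $B(t, x, a(t,x), \xi) = \sum_j B_j(t, x, a)\, \xi_j$ of order one, plus a remainder that is bounded on $L^2$. Because $a \in H^s$ with $s > \frac d2 + 1$, this symbol is Lipschitz in $x$ with seminorms controlled by $K_1$ of \eqref{3.5} and with $L^\infty$ norm controlled by $K_0$ of \eqref{3.4}; the paralinearization error is an operator of order zero, hence $L^2$-bounded, and is absorbed into the constant $C_1(K_1)$. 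It is exactly the limited $x$-regularity of the coefficients that forces the use of the paradifferential rather than the smooth pseudodifferential calculus.

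The core is the construction of a self-adjoint order-zero symmetrizer $S = \Id + T_V + T_V^*$ with $V$ a matrix symbol of order $-1$, arranged so that the normal-form relation \eqref{a1.4} holds. Using the smooth spectral decomposition $A(\xi) = \sum_p \lambda_p(\xi)\Pi_p(\xi)$ of Assumption~\ref{ass1}, I would define the Hermitian symbol $\sigma_V + \sigma_V^*$ block by block. The diagonal blocks $\Pi_p\, (\im B)\, \Pi_p$ vanish by Assumption~\ref{ass2} (the case $\lambda_p = \lambda_q$ with $p = q$), so no symmetrization is needed there and these blocks are set to zero. The off-diagonal blocks $\Pi_p(\sigma_V + \sigma_V^*)\Pi_q$ are taken proportional to $(\lambda_p - \lambda_q)^{-1}\Pi_p\,(\im B)\,\Pi_q$. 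The divisor relation \eqref{2.4} is precisely what guarantees that this quotient is a \emph{smooth} symbol across the eigenvalue crossings, since it equals a constant multiple of $V_{p,q}$; as $\im B$ is of order one and $\lambda_p - \lambda_q$ of order two, $V$ is genuinely of order $-1$, with seminorms bounded by $K_1$.

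With $S$ in hand I would differentiate the energy $\la S u, u\ra$ along solutions of \eqref{eq3.1}. Writing $L = iA(\D_x) + B$ and using $\D_t u = -Lu + f$,
$$
\frac{d}{dt}\la Su,u\ra
= \la (\D_t S) u, u\ra - \la \bigl(SL + L^*S\bigr) u, u\ra + 2\,\re \la Su, f\ra .
$$
Since $A$ is self-adjoint, $iA(\D_x)$ is anti-self-adjoint, so all terms in which $iA$ is not paired with $V$ cancel exactly; in particular the order-two part of $SL + L^*S$ vanishes. The order-one part is governed by the symbol $-2\,\im B + i[\sigma_A, \sigma_V + \sigma_V^*]$, which vanishes by the very choice of $V$, i.e.\ by \eqref{a1.4}. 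Hence, by the paradifferential symbolic calculus, $SL + L^*S$ is an operator of order zero, and is therefore $L^2$-bounded with norm controlled by $K_1$; this is the decisive point at which Assumption~\ref{ass2} enters. The term $\la (\D_t S) u, u\ra$ involves $\D_t V = \partial_a V \cdot \D_t a$, an order-$(-1)$ symbol controlled by $\|\D_t a\|_{H^{s-2}}$ and hence by $K_1$, and is again $L^2$-bounded, while $2\,\re\la Su, f\ra \le C\|u\|_{L^2}\|f\|_{L^2}$ because $S$ is bounded.

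Finally I would pass from $\la Su, u\ra$ to $\|u\|_{L^2}^2$: since $T_V + T_V^*$ is of order $-1$ it is $L^2$-bounded with small high-frequency norm, so after a harmless modification at bounded frequencies, a Gårding inequality yields $c\,\|u\|_{L^2}^2 \le \la Su, u\ra \le C\,\|u\|_{L^2}^2$ with constants depending only on $K_0$. Inserting the three estimates gives $\frac{d}{dt}\la Su, u\ra \le C_1(K_1)\la Su, u\ra + C\|u\|_{L^2}\|f\|_{L^2}$, and Gronwall's lemma together with the coercivity produces \eqref{3.3}, with $C_0(K_0)$ coming from the norm equivalence and $C_1(K_1)$ from the commutator and time-derivative bounds. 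The main obstacle is the combination of the second and third steps: one must check that dividing by $\lambda_p - \lambda_q$ yields a symbol with \emph{uniformly} bounded seminorms even where eigenvalues cross --- which is exactly what \eqref{2.4} provides --- and that the subprincipal corrections produced by composing the order-two operator $A$ with the order-$(-1)$ symbol $V$ (landing at order one through the Poisson-bracket term $\partial_\xi \sigma \cdot \partial_x \sigma$) genuinely fall to order zero and are controlled by $K_1$, so that the energy inequality closes.
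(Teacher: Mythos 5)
Your strategy coincides with the paper's own proof: paralinearize $B$ into $T_{iB}$ with an $L^2$-bounded error, build an order $-1$ symbol $V$ from the spectral decomposition of $A$ by dividing the off-diagonal blocks $\Pi_p(\im B)\Pi_q$ by $\lambda_p-\lambda_q$ (this is exactly Lemma~\ref{lem32}, with Assumption~\ref{ass2} killing the diagonal/crossing blocks and \eqref{2.4} giving smoothness of the quotient), form the symmetrizer --- the paper's \eqref{3.12}, $\Sigma=\Id+iT_V-i(T_V)^*+\gamma(1-\Delta_x)^{-1}$, whose $\gamma$-term is your ``harmless modification at bounded frequencies'' --- and close an energy estimate by the symbolic calculus plus Gronwall. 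All of that matches, including the observation that the composition of the order-two multiplier $A(\D_x)$ with $T_V$ produces only order-zero remainders controlled by $K_1$.

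There is, however, one genuine gap, and it is exactly the point the paper singles out after \eqref{comL}. You dispose of $\la(\D_t S)u,u\ra$ by calling $\D_t V=\partial_a V\cdot\D_t a$ ``an order-$(-1)$ symbol \dots\ again $L^2$-bounded.'' But under \eqref{3.2} one only has $\D_t a\in H^{s-2}$, and for $\frac d2+1<s<\frac d2+2$ the exponent $s-2$ is below $\frac d2$, so $\D_t a$ --- hence $\D_t V$ --- need \emph{not} be bounded in $x$. Thus $\D_t V$ lies in no class $\Gamma^{-1}_0$, and the standard action theorem (Proposition~\ref{action}) cannot be invoked for it; the step as you justify it fails at the stated regularity. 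The paper repairs this with a dedicated extension of the calculus: symbols whose spatial regularity is only Besov $B^{-1,\infty}_\infty$ (Definition~\ref{def31a}), the embedding $H^{s-2}\hookrightarrow B^{-1,\infty}_\infty$ valid precisely when $s-2>\frac d2-1$ (Lemma~\ref{lememb}), and Proposition~\ref{commdt}, which shows that the loss of one derivative of $x$-regularity is traded against one order in $\xi$: the operator with symbol $\D_t V$ has order $-1+1=0$, hence is $L^2$-bounded with norm $C_1(K_1)$, which is \eqref{3.15}. Your conclusion is therefore correct, but it does not follow from the facts you cite; without this extra piece of calculus you would have to assume $s>\frac d2+2$ (so that $\D_t a\in L^\infty$), which weakens the theorem.
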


\begin{lem} [Conjugation]  
\label{lem32}
For $| \xi | $ large, there is a smooth
invertible  matrix 
$ V_{-1}  (t, x, u, \xi) $, 
homogeneous of degree $-1$ in $\xi$, such that 
\begin{equation}
\label{conjug}
      B( t, x, u, \xi)  -  [  V_{-1} (t, x, u, \xi)  ,    A (\xi)  ]   
\end{equation} 
is self adjoint  and homogeneous of degree $1$ in $\xi$. 
\end{lem}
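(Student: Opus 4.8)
The plan is to split $B$ into its self-adjoint and anti-self-adjoint parts, $B=\re B+i\,\im B$ with $\re B=\frac12(B+B^{*})$ and $\im B=\frac{1}{2i}(B-B^{*})$ both self-adjoint, and then to choose $V_{-1}$ so that the commutator $[V_{-1},A]$ reproduces \emph{exactly} the anti-self-adjoint part $i\,\im B$. The point is that if one looks for $V_{-1}$ self-adjoint, then, since $A(\xi)$ is self-adjoint, $[V_{-1},A]^{*}=-[V_{-1}^{*},A]=-[V_{-1},A]$ is automatically anti-self-adjoint; hence the single equation $[V_{-1},A]=i\,\im B$ already forces $B-[V_{-1},A]=\re B$, which is self-adjoint. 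Thus the whole problem reduces to solving the Sylvester-type commutator equation $[V_{-1},A(\xi)]=i\,\im B$ by a self-adjoint symbol homogeneous of degree $-1$.

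To solve it I would diagonalize through the spectral resolution $A(\xi)=\sum_p\lambda_p(\xi)\Pi_p(\xi)$ of Assumption~\ref{ass1}. Sandwiching between projectors and using $A\Pi_q=\lambda_q\Pi_q$, $\Pi_pA=\lambda_p\Pi_p$ gives $\Pi_p[V_{-1},A]\Pi_q=(\lambda_q-\lambda_p)\,\Pi_pV_{-1}\Pi_q$. Matching this with $\Pi_p(i\,\im B)\Pi_q$ and invoking the factorization $\Pi_p(\im B)\Pi_q=(\lambda_p-\lambda_q)V_{p,q}$ from \eqref{2.4}, the natural choice off the diagonal is $\Pi_pV_{-1}\Pi_q=-i\,\Pi_pV_{p,q}\Pi_q$ for $p\neq q$, while the diagonal blocks may be set to $0$ because $\Pi_p(\im B)\Pi_p=0$ by the same assumption. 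Summing over $p\neq q$ produces the candidate $V_{-1}:=-i\sum_{p\neq q}\Pi_pV_{p,q}\Pi_q$, and a direct check using $\sum_p\Pi_p=\Id$ confirms $[V_{-1},A]=i\,\im B$.

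Two structural properties must then be verified. For self-adjointness, taking adjoints in \eqref{2.4} and using that $\lambda_p$ is real and $\im B$ self-adjoint yields $\Pi_q(\im B)\Pi_p=(\lambda_p-\lambda_q)(\Pi_pV_{p,q}\Pi_q)^{*}$; comparing with \eqref{2.4} applied to the swapped pair shows $\Pi_qV_{q,p}\Pi_p=-(\Pi_pV_{p,q}\Pi_q)^{*}$, and reindexing the sum then gives $V_{-1}^{*}=V_{-1}$, so that $B-[V_{-1},A]=\re B$ is indeed self-adjoint. For homogeneity, $\im B$ is homogeneous of degree $1$, $\lambda_p$ of degree $2$ and $\Pi_p$ of degree $0$, so every block of $V_{-1}$ is homogeneous of degree $1-2=-1$; hence $V_{-1}$ is homogeneous of degree $-1$ and $\re B$ of degree $1$, as required.

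The main obstacle, and the only place where Assumption~\ref{ass2} is used beyond the vanishing of the diagonal blocks, is \emph{smoothness across eigenvalue crossings}. The naive solution $\frac{1}{\lambda_q-\lambda_p}\,\Pi_p(i\,\im B)\Pi_q$ is singular on the cone $\{\lambda_p=\lambda_q\}$, and it is precisely the divisibility \eqref{2.4}, i.e. that $\lambda_p-\lambda_q$ divides $\Pi_p(\im B)\Pi_q$ with a \emph{smooth} quotient $V_{p,q}$, that cancels this singularity and makes $V_{-1}=-i\sum_{p\neq q}\Pi_pV_{p,q}\Pi_q$ smooth away from $\xi=0$; the homogeneous extension from the open dense set $\{\lambda_p\neq\lambda_q\}$ automatically preserves both the degree $-1$ homogeneity and the adjoint symmetry above. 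Finally, since $\|V_{-1}(\xi)\|=O(|\xi|^{-1})$, for $|\xi|$ large $\Id+V_{-1}$ is invertible, which is the form of invertibility needed for the conjugation \eqref{a1.3}.
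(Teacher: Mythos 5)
Your proof is correct, and the machinery is the same as the paper's: both construct $V_{-1}$ block-by-block from the spectral resolution $A(\xi)=\sum_p\lambda_p(\xi)\Pi_p(\xi)$, using the smooth quotients $V_{p,q}$ of Assumption~\ref{ass2} precisely so that the construction survives eigenvalue crossings. The difference is the normal form you aim for. The paper's one-line proof takes $V_{-1}=\sum_{p,q}\frac{1}{\lambda_p-\lambda_q}\Pi_p(\im B)\Pi_q$ (as printed it has typos: the last projector reads $\Pi_p$, the terms $p=q$ must be excluded, and a factor of $\pm i$ is missing) and asserts $B-[V_{-1},A]=\sum_p\Pi_p B\Pi_p$, the block-diagonal part of $B$; you instead solve $[V_{-1},A]=i\,\im B$ exactly, so that $B-[V_{-1},A]=\re B$. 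Your target is the more defensible one: a $V_{-1}$ built only from $\im B$ cannot remove the off-diagonal blocks of $\re B$, so the paper's identity holds only up to the (harmless, self-adjoint) term $\sum_{p\ne q}\Pi_p(\re B)\Pi_q$, and removing that term as well would require dividing $\Pi_p(\re B)\Pi_q$ by $\lambda_p-\lambda_q$, which need not be smooth at crossings since the divisibility hypothesis concerns only $\im B$. Both $\re B$ and $\sum_p\Pi_p B\Pi_p$ are self-adjoint under the hypotheses (the diagonal blocks of $\im B$ vanish), so either normal form proves the lemma; your writeup additionally supplies what the paper leaves implicit --- self-adjointness of $V_{-1}$, the homogeneity count, and invertibility of $\Id+V_{-1}$ for $|\xi|$ large, which is the form of invertibility actually used in \eqref{a1.3}.
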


\begin{proof}
 Set 
 $$
 V_{-1}  : = \sum_{ p  ,  q} \frac {1}{ \lambda_p - \lambda_q} \Pi_p ( \im B )  \Pi_p 
 $$
 so that  $B - [ V_{-1}, A ] = \sum \Pi_p B \Pi_p $ is self adjoint.
 \end{proof}

\begin{proof}[Proof of Theorem~$\ref{th3.1}$.]
  Use the paradifferential calculus and the notations of Section~5.  
  
 {\bf a) }   For simplicity  denote by $B_j (t, x)  $ the matrix $B_j(t, x, a(t, x)) $ 
 and by $B = B(t, x, a(t,x) ,\xi)$ the symbol $  \sum \xi_j B_j$.  Because $s > 1 + \frac{d}{2}$, \eqref{3.2} implies that 
 $B_j \in C^0 ([0, T]; H^s)$, $\D_t B_j \in C^0([0, T]; H^{s-2})$ and  
 \begin{equation}
 \label{3.7}
   \| B_j \|_{L^\infty ([0, T]; H^s( \RR^d) ) }  +  \| \D_t B_j \|_{L^\infty ([0, T]; H^{s-2}( \RR^d) ) }
   \le C_1(K_1).  
   \end{equation}
  In particular, as a symbol, $B $ belongs to the class $ \widetilde \Gamma^1_1$ 
  introduced in Definition~\ref{def31b}.  Using the  paralinearization Proposition~\ref{paralin}
  we see that 
  $f_1 := B(t, x, \D_x) u - T_{i B} u  $ satisfies
  \begin{equation}
  \label{3.8}
  \| f_1(t) \|_{L^2}  \le C_1 (K_1) \| u(t) \|_{L^2} ,  
  \end{equation}
  and $u$ satisfies   the paralinearized equation: 
  \begin{equation}
 \label{3.9}
  \D_t u    +  i  A (\D_x) u   +   T_{i B}  u  =  f + f_1   , \qquad  u_{| t = 0}  = h, 
\end{equation}
  
  {\bf b)} Similarly, use the simplified notation 
  $  V(t, x, \xi) = V_{-1} ( t, x, a(t, x), \xi) \zeta (\xi) $ where $\zeta \in C^\infty (\RR^d)$ vanishes 
  near the origin and is equal to 1 for $| \xi | \ge 1$. 
Note that $V \in \widetilde \Gamma^{-1}_1$ and that for all $\alpha$ there are functions
$C_{0, \alpha}$ and $C_{1, \alpha} $ such that for all $t \in [0, T]$ and $\xi \in \RR^d$. 
  \begin{eqnarray}
  \label{3.10}  
&&  \big\| 
 \partial_{\xi  }^\alpha   V  ( t, \cdot  , \xi ) \big\Vert_{ L^\infty  }    \le  C_{0, \alpha}  (K_0)  (1 +   \vert  \xi  \vert )   ^ {    \vert
\mu - \alpha \vert     }
\\
\label{3.11}  
 && \big\| 
 \partial_{\xi  }^\alpha \D_t V  ( t, \cdot  , \xi ) \big\Vert_{ H^{s-2} }    \le  C_{1, \alpha}  (K_1)  (1 +   \vert  \xi  \vert )   ^ {    \vert
\mu - \alpha \vert     }
        \, .
\end{eqnarray}
 
Use  a  symmetrizer,
  \begin{equation}
  \label{3.12}
  \Sigma \ :=\ \Id  +   i T_{V } -  i  (T_V)^*   + \gamma (1- \Delta_x)^{-1} . 
  \end{equation}
By Proposition~\ref{action} and Remark~\ref{rem56}, there is a constant $C_0(K_0)$ 
which depends only on $K_0$ such that
  $$
 \big\| T_V u  (t)  \big\|_{ H^{1} }  \le  C_0(K_0)  \| u (t) \big\|_{L^2 }. 
  $$
  Therefore,
  $$
  \big(   \Sigma u, u \big)_{L^2}  \ge  \big\| u \big\|^2_{L^2 } -
    2 C_0 (K_0)  \big\| u \big\|_{L^2 } \big\| u \big\|_{H^{-1} } + \gamma  \big\| u \big\|^2_{H^{-1} }.
  $$
Choose  $\gamma = \gamma(K_0)$  so that 
  \begin{equation}
  \label{3.13}
\big(   \Sigma u, u \big)_{L^2} \ \ge\ \mez \big\| u \big\|^2_{L^2 }. 
  \end{equation}
Then, with  another constant $C_0(K_0)$, 
  \begin{equation}
  \label{3.13b}
  \big\| \Sigma u(t) \big\|_{L^2} \le C_0 (K_0) \big\| u(t) \big\|_{L^2}. 
  \end{equation} 
  
  {\bf c) } Compute 
\begin{equation}
\label{3.14}
  \frac{d}{dt} \big(   \Sigma(t)  u (t) , u(t)  \big)_{L^2}    = 2 \re 
  \big(   \Sigma \D_t  u, u \big)_{L^2}    + \big(  [\D_t ,  \Sigma]  u, u \big)_{L^2}  . 
 \end{equation}
    By Lemma~\ref{lememb} and Proposition~\ref{commdt},  $ [\D_t ,  \Sigma] =  [\D_t , T_V ]  + [\D_t, T_V]^*$ is bounded 
    from $L^2$ to $L^2$ and,
   \begin{equation}
   \label{3.15}
    \big( [\D_t, \Sigma ] u  (t) ,  u(t)  \big)_{ L^2 }  \le  C_1(K_1)   \| u (t) \big\|^2_{L^2 }\,. 
   \end{equation}
   Next, observe that 
 $  T_V A(\D_x)  =  A(D_x ) T_V +  [T_V, A(\D_x)]  = - T_{V A(\xi) }  $. Therefore,  
    the equation and the symbolic calculus of Proposition~\ref{symbcal} imply that 
    $$
    \Sigma \D_t  u =  - A(\D_x) u    -   i  A(\D_x) T_V A(\D_x)  +   i  ( T_V) ^{*} A(D_x) 
     -      i  T_{ \widetilde B  } u  + \Sigma f +  f_2 
    $$
    where $\widetilde B $ is the symbol $B(t, x, \xi) - [V(t, x, \xi), A(\xi)] \in \widetilde \Gamma^1_1$ 
    and $f_2$ satisfies an estimate similar to \eqref{3.8}. 
    By Lemma~\ref{lem32}  $\widetilde B$  is self adjoint for $| \xi | \ge 2$, and hence Proposition~\ref{adjoint} implies that 
    $$
    \re \big(  i T_B u (t) , u (t)  \big)_{L^2}  \le  C_1(K_1)   \| u (t) \big\|^2_{L^2 }\,.
    $$
    Since $A(\D_x)$ is self adjoint, we conclude that 
    \begin{equation}
    \label{3.16}
      \frac{d}{dt} \big(   \Sigma(t)  u (t) , u(t)  \big)_{L^2}    
      \le  2  \|\Sigma  f (t) \big\|_{L^2 }   \| u (t) \big\|_{L^2 }   +     C_1(K_1)   \| u (t) \big\|^2_{L^2 }\,.  
    \end{equation} 
    Equations \eqref{3.13} and \eqref{3.13b}  imply   estimate \eqref{3.3}.  
\end{proof} 
\medbreak


\section{Sobolev estimates and nonlinear existence}

  $H^s$ estimates for the linearized equation \eqref{eq3.1} are obtained by differentiating 
  the equation.  The commutators $ [\D_x^\alpha, B (t, x, a , \D_x] u$ are estimated by
  standard nonlinear estimates as in the analysis of first order hyperbolic equations.
  Because $s > \frac{d}{2} + 1$, for $|\alpha | \le s$, one has,
  \begin{equation}
  \label{4.1}
  \Big\|\, \big[\D_x^\alpha\,,\, B (t, x, a , \D_x)\big] u (t)   \Big\|_{L^2}  \le  C_1(K_1) \big\| u  (t)   \big\|_{H^s} 
  \end{equation} 
 This implies the following estimates.

 \begin{prop}
\label{prop41} 
There are functions $C_0$  and $C_1$    such  that smooth solutions of \eqref{eq3.1} satisfy 
\begin{equation}
\label{4.2}
\big\| u  (t)   \big\|_{H^s} 
\  \le\
  C_0( K_0) e^{ t C_1 (K_1)}
\Big(  \big\| u  (0)   \big\|_{H^s}   + 
\int_0^t   \big\| f  (t')   \big\|_{H^s}  dt'\Big) 
\end{equation}
with  $K_0$ and $K_1$ defined at \eqref{3.4} and \eqref{3.5}. 
 
\end{prop}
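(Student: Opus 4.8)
The plan is to reduce the $H^s$ bound to the $L^2$ estimate of Theorem~\ref{th3.1} by differentiating \eqref{eq3.1}. Fix a multi-index $\alpha$ with $|\alpha| \le s$ and set $v_\alpha := \D_x^\alpha u$. Since $A(\D_x)$ is a constant-coefficient Fourier multiplier, $\D_x^\alpha$ commutes with $iA(\D_x)$, so applying $\D_x^\alpha$ to \eqref{eq3.1} gives
\begin{equation*}
\D_t v_\alpha + iA(\D_x) v_\alpha + B(t,x,a,\D_x) v_\alpha = \D_x^\alpha f - \big[\D_x^\alpha,\, B(t,x,a,\D_x)\big] u .
\end{equation*}
Thus each $v_\alpha$ solves a problem of exactly the form \eqref{eq3.1}, with the same operators $iA(\D_x)$ and $B$ but with the modified source $g_\alpha := \D_x^\alpha f - [\D_x^\alpha, B]u$. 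The commutator estimate \eqref{4.1} controls the new piece, $\big\|[\D_x^\alpha,B]u(t)\big\|_{L^2} \le C_1(K_1)\|u(t)\|_{H^s}$, so that $\|g_\alpha(t)\|_{L^2} \le \|f(t)\|_{H^s} + C_1(K_1)\|u(t)\|_{H^s}$.

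Rather than invoke Theorem~\ref{th3.1} as a black box, which would introduce an inconvenient time-dependent prefactor in front of the Gronwall term, I would repeat the symmetrizer argument of its proof at the level of the $v_\alpha$. Using the same $\Sigma$ from \eqref{3.12} and summing the differential estimate \eqref{3.16} over $|\alpha| \le s$, I set $E(t) := \sum_{|\alpha| \le s}(\Sigma v_\alpha, v_\alpha)_{L^2}$. By \eqref{3.13} and \eqref{3.13b}, $E(t)$ is equivalent to $\|u(t)\|_{H^s}^2$ with constants depending only on $K_0$; this is where the prefactor $C_0(K_0)$ in \eqref{4.2} will originate. Applying \eqref{3.16} to each $v_\alpha$ and inserting the bound on $g_\alpha$ yields
\begin{equation*}
\frac{d}{dt} E(t) \le C\,\|f(t)\|_{H^s}\,\|u(t)\|_{H^s} + C_1(K_1)\,\|u(t)\|_{H^s}^2 ,
\end{equation*}
the only genuinely new contribution being the commutator term, which is absorbed into the second term on the right by \eqref{4.1}.

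Finally I would convert this into \eqref{4.2}. Writing the inequality as $\tfrac{d}{dt}E \le C_1(K_1)\,E + C\|f\|_{H^s}\sqrt{E}$ via the equivalence $E \sim \|u\|_{H^s}^2$, dividing by $\sqrt{E}$ and integrating gives
\begin{equation*}
\sqrt{E(t)} \le e^{t C_1(K_1)}\Big(\sqrt{E(0)} + C\int_0^t \|f(t')\|_{H^s}\,dt'\Big),
\end{equation*}
and translating back through $E \sim \|u\|_{H^s}^2$ produces the prefactor $C_0(K_0)$ and the claimed form \eqref{4.2}.

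The substantive input is really the commutator estimate \eqref{4.1}, a Moser/Kato--Ponce type nonlinear bound that uses $s > \tfrac{d}{2}+1$ so that $\D_x u \in L^\infty$; but this is granted in the statement preceding the proposition, so within the proof itself the only delicate bookkeeping is ensuring the constants keep the split structure (prefactor depending only on $K_0$, exponential rate depending only on $K_1$), which is exactly why one tracks the symmetrizer equivalence constants separately from the growth rate. For noninteger $s$ one replaces $\D_x^\alpha$ by $\Lambda^s := (1-\Delta_x)^{s/2}$ and uses the fractional analogue of \eqref{4.1}, namely $\|[\Lambda^s, B]u\|_{L^2} \le C_1(K_1)\|u\|_{H^s}$; the rest of the argument is unchanged.
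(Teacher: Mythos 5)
Your proof is correct and follows essentially the same route the paper intends: differentiate the equation, control the commutators by \eqref{4.1}, and rerun the $L^2$ symmetrizer energy argument of Theorem~\ref{th3.1} on the differentiated quantities, closing with Gronwall. The paper leaves these details implicit (its ``proof'' is the paragraph preceding the proposition), and your choice to apply the differential inequality \eqref{3.16} to each $\D_x^\alpha u$ rather than invoke Theorem~\ref{th3.1} as a black box is exactly what is needed to avoid compounding exponentials and to preserve the stated constant structure $C_0(K_0)\, e^{t C_1(K_1)}$.
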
 

As in the hyperbolic theory, this estimates implies the 
following strong continuity result.

\begin{prop}
\label{prop4.2}
Suppose that $a $ satisfies \eqref{3.2}, $f \in L^1 ([0, T], H^s)$ and 
$h\in H^s$. If $u \in C^0_w ([0, T]; H^s)$ is a solution of 
\eqref{eq3.1}, then $u \in C^0([0, T], H^s)$. 
\end{prop}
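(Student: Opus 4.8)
The plan is to derive strong continuity from the combination of the given weak continuity and a quantitative energy balance, exactly as for symmetric hyperbolic systems. The target is strong $L^2$ continuity of $v(t):=(1-\Delta_x)^{s/2}u(t)$, which is the same as strong $H^s$ continuity of $u$. Weak continuity of $u$ in $H^s$ gives weak $L^2$ continuity of $v$, so by the standard Hilbert space fact it suffices to produce a fixed positive self-adjoint operator $P$ with $(Pv(t),v(t))_{L^2}\to(Pv(t_0),v(t_0))_{L^2}$ as $t\to t_0$: then $P^{1/2}v(t)\rightharpoonup P^{1/2}v(t_0)$ together with convergence of norms forces $P^{1/2}v(t)\to P^{1/2}v(t_0)$ strongly, and inverting the boundedly invertible $P^{1/2}$ gives $v(t)\to v(t_0)$. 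I would take $P=\Sigma(t_0)$, the symmetrizer \eqref{3.12} frozen at the target time, which by \eqref{3.13} satisfies $\Sigma(t_0)\ge\mez\,\Id$.

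Before anything else I would record the low-regularity upgrade $u,a\in C^0([0,T];H^{s'})$ for every $s'\in[s-2,s)$: both are bounded in $H^s$ and, by \eqref{3.2} and the equation \eqref{eq3.1}, strongly continuous into $H^{s-2}$, so interpolation gives strong continuity at every intermediate level; fix one such $s'>d/2$. This upgrade is what makes the \emph{moving} symmetrizer depend continuously on time: since $V$ is a smooth function of $a$, the paradifferential action estimate (Proposition~\ref{action}, Remark~\ref{rem56}) yields $\|\Sigma(t)-\Sigma(t_0)\|_{\mathcal L(L^2)}\to0$ as $t\to t_0$. Consequently, for the energy $E(t):=(\Sigma(t)v(t),v(t))_{L^2}$, freezing the symmetrizer costs only $o(1)$, so $E(t)=(\Sigma(t_0)v(t),v(t))_{L^2}+o(1)$; the positivity of $\Sigma(t_0)$ then makes $E$ weakly lower semicontinuous, $E(t_0)\le\liminf_{t\to t_0}E(t)$, and reduces the whole problem to the matching upper bound $\limsup_{t\to t_0}E(t)\le E(t_0)$.

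For the upper bound I would use the estimate of Proposition~\ref{prop41} in its differential rather than exponentiated form, run in both time directions. The computation behind \eqref{3.14} and Proposition~\ref{prop41}, in which the conjugation of Lemma~\ref{lem32} turns $B$ into the self-adjoint $\widetilde B$ so that the top-order contribution $\re(iT_{\widetilde B}v,v)$ drops to lower order, produces a differential inequality of the form $\tfrac{d}{dt}E\le C_1(K_1)\,E+C\,\|f\|_{H^s}\|u\|_{H^s}$. Integrating from $t_0$ gives $\limsup_{t\downarrow t_0}E(t)\le E(t_0)$, and applying the same to the time-reversed equation — admissible because $A$ is self-adjoint and the hypotheses on $B$, carried by $\im B$ and the eigenvalue differences in Lemma~\ref{lem32}, are preserved under $B\mapsto-B$ — gives $\limsup_{t\uparrow t_0}E(t)\le E(t_0)$. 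Pinching against the lower bound makes $E$ continuous, which is precisely the ingredient the first paragraph needs.

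The main obstacle is that Proposition~\ref{prop41} and the identity \eqref{3.14} are valid only for smooth solutions, whereas $u$ is merely weakly continuous, so the differential inequality for $E$ is not a priori legitimate. I would remove this by a Friedrichs regularization: with a mollifier $J_\eps=\chi(\eps D)$ the function $u_\eps:=J_\eps u$ is smooth in $x$ and, thanks to the $C^0(H^{s-2})$ continuity above, absolutely continuous in $t$ at every Sobolev level, and it solves \eqref{eq3.1} with right-hand side $f_\eps=J_\eps f+[B,J_\eps]u$, since $J_\eps$ commutes with $\D_t$ and with $A$. The differential inequality is then legitimate for the regularized energy, and the delicate point — the one I expect to be the crux — is controlling the commutator uniformly at the top order $s$: by Friedrichs' commutator lemma $[B,J_\eps]$ is bounded on $H^s$ uniformly in $\eps$ with $[B,J_\eps]u\to0$ in $H^s$, so the source terms pass to the limit and the inequality survives $\eps\to0$, yielding it for $u$ itself.
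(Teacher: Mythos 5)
Your proposal is correct, but it concludes by a genuinely different mechanism than the paper. The paper's proof is the Cauchy-family argument: with $J_\eps=(1-\eps\Delta_x)^{-1}$, the mollified function $J_\eps u$ solves \eqref{eq3.1} with right-hand side $f_\eps=J_\eps f+[B,J_\eps]u\to f$ in $L^1([0,T];H^s)$; since the equation is linear, the difference $J_\eps u-J_{\eps'}u$ solves the same equation with data and source tending to zero, so the a priori estimate \eqref{4.2} of Proposition~\ref{prop41} makes $\{J_\eps u\}$ a Cauchy family in $C^0([0,T];H^s)$, and the uniform limit of continuous functions is continuous. You instead run the classical Kato-style route: weak continuity plus continuity of a coercive energy forces strong continuity, where right-continuity of $E(t)=(\Sigma(t)v(t),v(t))$ comes from the differential form of the energy inequality and left-continuity from time reversal. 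Both share the Friedrichs regularization and the commutator convergence $[B,J_\eps]u\to 0$ in $L^1_tH^s$ (that is exactly the paper's claim ``$f_\eps\to f$''), so neither is cheaper on that technical point. What the paper's argument buys is brevity and the fact that no structure needs to survive time reversal: it never discusses coercivity of $\Sigma$, frozen symmetrizers, or weak lower semicontinuity, and it yields continuity on all of $[0,T]$ at one stroke. What your argument buys is a correct diagnosis that the exponentiated estimate \eqref{4.2} alone is useless here (its constant $C_0(K_0)$ does not tend to $1$), so one must work with the differential inequality; the paper sidesteps this entirely by applying \eqref{4.2} to differences that tend to zero. Two small points in your write-up deserve care: (i) time reversal $t\mapsto t_0-t$ flips the sign of \emph{both} $A$ and $B$, not only $B$; the argument survives because Assumptions~\ref{ass1}--\ref{ass2} are invariant under $(A,B)\mapsto(-A,-B)$ and the conjugator of Lemma~\ref{lem32}, hence the symmetrizer \eqref{3.12}, is unchanged, so the reversed energy is the same $E$; (ii) the operator-norm continuity of $t\mapsto\Sigma(t)$ needs, besides $a(t)\to a(t_0)$ in $L^\infty$, the explicit $t$-dependence of $B(t,x,u,\xi)$, which is fine since the coefficients are smooth in $t$.
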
 

\begin{proof}
With  $J_\eps = (1 -\eps \Delta_x)^{-1} $, one checks that 
$J_\eps u $ satisfies 
\begin{equation}
 \label{eq3.1smooth}
  \D_t J_\eps u    +  i  A (\D_x) J_\eps u   +   B(t, x,  a, \D_x ) J_\eps u  =  f_\eps   , 
  \qquad  J_\eps u_{| t = 0}  =  J_\eps h, 
\end{equation}
with $f_\eps \to f$ in $ L^1 ([0, T], H^s)$. Applying the estimates to $J_\eps u$, one 
obtains that $J_\eps u$ is a Cauchy family in $C^0([0, T], H^s)$, implying that 
$u\in C^0([0, T], H^s)$.
\end{proof} 

\noindent
Turn to the   proof of the   main result. 
More details can be found in \cite{Met2}.

\begin{proof}[Proof of Theorem~$\ref{th2.5}$]
(i)  To solve  \eqref{eq3.1}  for  $a$ satisfying 
\eqref{3.2} use the mollified equations 
\begin{equation}
 \label{eq3.1mol}
  \D_t u^\eps     +  i  A (\D_x) J_\eps u^\eps    +   B(t, x,  a, \D_x ) J_\eps u^\eps  =  f   , \qquad  u_{| t = 0}  = h, 
\end{equation}
where $J_\eps = (1 -\eps \Delta_x)^{-1} $. For fixed  $\eps$, this is a linear o.d.e in $H^s$
since $A(D_x)J_\eps $ and $B J_\eps$ are bounded. One checks that the proof 
of the estimates \eqref{4.2} for the solutions of \eqref{eq3.1} immediately extends 
to the solutions of \eqref{eq3.1mol},  because $\{J_\eps\}$ is a bounded 
family of pseudodifferential operators of degree $0$,  are the new commutators 
they generate are remainders in the symbolic calculus developed in section~\ref{sec3}. 
Therefore, the $u^\eps$ are uniformly bounded in 
$C^0 ([0, T] ; H^s)$.
The equation shows that they are bounded
in $ C^1([0, T], H^{s-2})$. 

Extracting  a subsequence and passing to the 
weak limit yields a  solution  $u \in C^0_w ([0, T], H^s)$. 
By Proposition~\ref{prop4.2},   $u \in C^0 ([0, T], H^s)$.

(ii)  Solve the nonlinear equation using the iteration scheme, 
\begin{equation}
 \label{eq3.1it}
  \D_t u_{n+1}     +  i  A (\D_x) u_{n+1}   +   B(t, x,  u_n \D_x ) u_{n+1}  = 0 , 
  \qquad  u_{n+1}{}_{| t = 0}  = h. 
\end{equation}
Using the estimate \eqref{4.2}, one proves that there is $T> 0$ such that 
the sequence $\{u_n\}$ is bounded in  $C^0 ([0, T], H^s)$ and in 
$ C^1 ([0, T], H^{s-2})$. Knowing this bound in high norm, 
one checks that the sequence $u_n$ converges in a low norm 
$C^0([0, T]; L^2)$. Passing to the limit gives a solution of 
\eqref{eq1.1} $u \in C^0_w ([0, T], H^s)$, which also belongs to 
$ C^1 ([0, T], H^{s-2})$. Using Proposition~ \ref{prop4.2}, 
one obtains that $u\in  C^0 ([0, T], H^s)$.
\end{proof}


 
\section{Handbook of paradifferential calculus}

The symmetrizers
are paradifferential  operators
in the variables $x$, depending on the parameter $t$.
This section  reviews the paradifferential
calculus  extended to the case of time dependent 
symbols.

 \subsection{The spatial calculus}

 Consider operators on $\RR^d$. The variables are denoted
$x $ and the frequency variables
$\xi $.

\begin{defi} [Symbols]  Let $\mu \in \RR$.
\label{def31}

i) $\Gamma^\mu_0$ denotes the space of locally $L^\infty$ functions
$a(x, \xi) $ on $\RR^d \times \RR^{d}  $
which are $C^\infty$ with respect to $\xi$ and such that for all
$\alpha \in \NN^d$ there is a constant $C_\alpha$ such that
\begin{equation}
\label{estimsymb3}
\forall (x, \xi) \, , \quad
\vert \partial_{\xi  }^\alpha a ( x, \xi ) \vert \, \le \,
C_\alpha \,
        (1 +   \vert  \xi  \vert )   ^ {  \mu -  \vert
\alpha \vert   }  \, .
\end{equation}

ii)  $\Gamma^\mu_1$ denotes the space of symbols $a \in \Gamma^\mu_0$
such that for all $j$, $\partial_{x _j} a \in \Gamma^\mu_0$.

\end{defi}

\bigbreak

The paradifferential calculus in $\RR^d$, was introduced
 by J.M.Bony \cite{Bony} (see also
\cite{Meyer}, \cite{Hormander}, \cite{Taylor},
\cite{MetKochel}).
The reference  \cite{Met2}
gives a detailed account of the time dependent 
results needed here. The calculus 
associates operators $T_a$ to   symbols 
$a \in \Gamma^\mu_0$.
They act in the scale of Sobolev spaces $H^s(\RR^d)$. Moreover, there is a symbolic calculus
at order one for symbols in $\Gamma^\mu_1$. 
Recall here the definition, as we will need it later on. 

Consider a $C^\infty$ function $\psi(\eta, \xi )$ 
 on  $\RR^n \times \RR^n $ such that 

\quad 1) there are $\varepsilon_1 $ and $\varepsilon_2$ such that
$0< \varepsilon_1 < \varepsilon_2 < 1$ and 
\begin{equation}
\label{p5.2}
\left\{\begin{aligned}
& \psi(\eta, \xi ) \, = 1 \quad {\rm for } \  
\vert \eta \vert \le 
\varepsilon_1 ( 1  + \vert \xi \vert )  \,  \,
\\
& \psi(\eta, \xi) \, = 0 \quad {\rm for } \  
\vert \eta \vert \ge \varepsilon_2 
(1 + \vert \xi \vert ) \,   \, . 
\end{aligned}\right.
\end{equation} 
 
\quad 2) for all $(\alpha, \beta) \in \NN^n \times \NN^n$, there is 
$C_{\alpha,\beta}$ such that 
\begin{equation}
\label{p5.3}
\forall (\eta, \xi, \gamma)\,  : \quad
\vert \partial _\eta^\alpha
\partial _\xi^\beta \psi(\eta , \xi, \gamma) \vert  
\le \, C_{\alpha, \beta} 
 (1 + \vert \xi \vert)^{- \vert \alpha \vert - \vert \beta \vert }
\,. 
\end{equation} 
For instance one can  consider with $N \ge 3$: 
\begin{equation}
\label{p5.4} 
\psi_N(\eta, \xi ) \, = \, \sum_{k=0}^{+ \infty}  
\chi_{k-N} ( \eta  )  \varphi_k   (\xi) 
\end{equation}
where $\chi \in C^\infty_0(\RR^d)$ satisfies $0 \le \chi \le 1$ and
\begin{equation}
\label{Achi}
\chi(\xi  ) \, = 1  \quad {\rm for } \
\vert \xi  \vert \, \le 1.1 \, ,
\quad
\chi(\xi  ) \, = 0 \quad {\rm for } \
\vert \xi \vert \, \ge 1.9 \,, 
\end{equation}
and for $k \in \ZZ$, 
\begin{equation}
\chi_k(\xi)  = \chi\big(2^{-k}   \xi  \big) 
\end{equation}
 and 
\begin{equation}
\label{p5.7}
 \varphi_0  = \chi_0 
   \quad \mathrm{and\ for \ }  k \ge 1 \quad
\varphi _{k}   = \chi _{k} - \chi_{k-1}  .
\end{equation}

  A  function $\psi$ satisfying  \eqref{p5.2} \eqref{p5.3} is  an {\sl admissible cut-off}. 
Consider next $G^\psi (\, \cdot  \, , \xi)$ the inverse Fourier
transform of $\psi(\, \cdot  \, , \xi)$.   
For $ a \in \Gamma^\mu_0$ define 
\begin{equation} \sigma^\psi_a (x, \xi) \, := \, 
\int G^\psi(x - y, \xi) \, a(y, \xi )\,  dy 
\end{equation}
 or equivalently on  the Fourier side in $x$,
\begin{equation}
\widehat \sigma^\psi_a (\eta, \xi ) \, = \, 
\psi(\eta, \xi ) \, \widehat a(\eta, \xi ). 
\end{equation}
 The   symbol
$\sigma \in \Gamma^\mu_0$  and   belongs to   H\"ormander's class 
 $S^\mu_{1, 1}$.
 The paradifferential   operator $T^\psi_a$ is defined by  
\begin{equation}
\label{p5.10}
T^{\psi}_a  u (x) \, := \, \frac{1}{(2\pi)^n} 
\int e^{ i \xi \cdot x}\, \sigma_a^\psi (x, \xi) \, \widehat u(\xi) \, d
\xi\, . 
\end{equation}

  We collect here the main results.

\begin{prop} [Action]
\label{action}  
\label{actionpara} Suppose that $\psi$ is an admissible cut-off.

 i) When $a(\xi )$ is a symbol independent
of ${x} $, the operator $T^\psi_a$ is equal to the
Fourier multiplier $a(D)$.

 ii) For all $a \in \Gamma^\mu_{0}$ and $s\in \RR$,   
$T^\psi_a $ is a bounded operator from $ H^s(\RR^d)$ to 
$H^{ s-\mu}(\RR^d)$. 
\end{prop}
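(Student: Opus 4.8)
The plan is to prove (i) by a direct computation from the definition and (ii) by a Littlewood--Paley argument exploiting the frequency localization built into the cut-off $\psi$. For (i), recall that when $a=a(\xi)$ is independent of $x$ its $x$-Fourier transform is $\widehat a(\eta,\xi)=(2\pi)^d a(\xi)\,\delta_0(\eta)$, so that $\widehat{\sigma^\psi_a}(\eta,\xi)=\psi(\eta,\xi)\widehat a(\eta,\xi)=\psi(0,\xi)(2\pi)^d a(\xi)\,\delta_0(\eta)$. Since $0\le\varepsilon_1(1+|\xi|)$, the first line of \eqref{p5.2} gives $\psi(0,\xi)=1$, whence $\sigma^\psi_a(x,\xi)=a(\xi)$ is independent of $x$ and $T^\psi_a u=\frac{1}{(2\pi)^d}\int e^{i x\cdot\xi}a(\xi)\widehat u(\xi)\,d\xi=a(D)u$.

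For (ii), I would first note that the convolution formula for $\sigma^\psi_a$, together with the bounds \eqref{p5.3} on $\psi$ (hence on the $L^1$ norms of the $\xi$-derivatives of $G^\psi(\cdot,\xi)$) and the symbol bounds \eqref{estimsymb3} for $a\in\Gamma^\mu_0$, yield $\sup_x|\partial_\xi^\alpha\sigma^\psi_a(x,\xi)|\lesssim(1+|\xi|)^{\mu-|\alpha|}$ for every $\alpha$. I would then Littlewood--Paley decompose $u=\sum_k\varphi_k(D)u$ and write $T^\psi_a u=\sum_k T^\psi_a\varphi_k(D)u$, the $k$-th term being the operator with symbol $\sigma^\psi_a(x,\xi)\varphi_k(\xi)$, supported in $|\xi|\sim 2^k$. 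The two remaining ingredients are a spectral-localization (almost-orthogonality) statement and a per-block $L^2$ bound.

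The spectral localization is the heart of the matter, and the reason a paradifferential operator is bounded even though its symbol lies only in the critical class $S^\mu_{1,1}$, which is not $L^2$-bounded in general. By the second line of \eqref{p5.2}, for each fixed $\xi$ the function $x\mapsto\sigma^\psi_a(x,\xi)$ has $x$-spectrum in $\{|\eta|\le\varepsilon_2(1+|\xi|)\}$. Hence $T^\psi_a\varphi_k(D)u$ has $x$-spectrum in $\{\,|\eta+\xi|:\ |\eta|\le\varepsilon_2(1+|\xi|),\ |\xi|\in[2^{k-1},2^{k+1}]\,\}$, which, because $\varepsilon_2<1$, lies in an annulus $c_1 2^k\le|\theta|\le c_2 2^k$ with $0<c_1<c_2$ depending only on $\varepsilon_2$. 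These annuli overlap with multiplicity bounded by some $M=M(\varepsilon_2)$, so the family $\{T^\psi_a\varphi_k(D)u\}_k$ is almost orthogonal in every $H^\sigma$ and
\begin{equation*}
\|T^\psi_a u\|^2_{H^{s-\mu}}\ \lesssim\ \sum_k 2^{2(s-\mu)k}\,\|T^\psi_a\varphi_k(D)u\|^2_{L^2}.
\end{equation*}
The finitely many low blocks, where the annulus picture degenerates, give an operator acting on $\chi_0(D)u$ with symbol supported in a fixed ball, trivially $L^2$-bounded, and are treated separately.

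For the per-block bound I would use the elementary $L^2$ estimate for frequency-localized symbols: if $b(x,\xi)$ is supported in $|\xi|\sim 2^k$ and satisfies $\sup_x|\partial_\xi^\alpha b(x,\xi)|\lesssim 2^{k(\mu-|\alpha|)}$ for $|\alpha|\le d+1$, then integration by parts in $\xi$ shows its kernel $\kappa(x,z)=\frac{1}{(2\pi)^d}\int e^{iz\cdot\xi}b(x,\xi)\,d\xi$ obeys $\sup_x\|\kappa(x,\cdot)\|_{L^1}\lesssim 2^{k\mu}$, and Schur's test (both integrals being controlled by this same quantity) gives $\|b(x,D)\|_{L^2\to L^2}\lesssim 2^{k\mu}$ using only the $L^\infty_x$ control of the symbol. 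Applying this to $b=\sigma^\psi_a\,\varphi_k$ yields $\|T^\psi_a\varphi_k(D)u\|_{L^2}\lesssim 2^{\mu k}\|\varphi_k(D)u\|_{L^2}$, and combining with the previous display,
\begin{equation*}
\|T^\psi_a u\|^2_{H^{s-\mu}}\ \lesssim\ \sum_k 2^{2sk}\,\|\varphi_k(D)u\|^2_{L^2}\ \simeq\ \|u\|^2_{H^s}.
\end{equation*}
The main obstacle is precisely the spectral-localization step: one must use $\varepsilon_2<1$ to rule out uncontrolled low-frequency output, which is exactly the structural feature distinguishing paradifferential operators from general $S^\mu_{1,1}$ operators. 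The per-block estimate and the square-summation are then routine.
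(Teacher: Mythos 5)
Your proof is correct, but note that the paper itself offers no proof of this proposition: Section~5 is a ``handbook'' that collects the standard facts of paradifferential calculus and refers to the literature (Bony, Meyer, and in particular M\'etivier's lecture notes \cite{Met2}) for the proofs. What you have written is essentially the standard argument from those references, and it is sound: part (i) is the immediate computation $\widehat{\sigma^\psi_a}(\eta,\xi)=\psi(0,\xi)(2\pi)^d a(\xi)\delta_0(\eta)$ with $\psi(0,\xi)=1$; part (ii) combines the uniform symbol bounds $\sup_x|\partial_\xi^\alpha\sigma^\psi_a(x,\xi)|\lesssim(1+|\xi|)^{\mu-|\alpha|}$ (which the paper records by saying $\sigma^\psi_a\in\Gamma^\mu_0\cap S^\mu_{1,1}$), the per-block Schur-test bound $\|T^\psi_a\varphi_k(D)\|_{L^2\to L^2}\lesssim 2^{k\mu}$, and the almost-orthogonality of the outputs coming from the spectral support condition. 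You correctly identify the crux: it is the condition $\varepsilon_2<1$ in \eqref{p5.2} that forces the output of the $k$-th block into an annulus $|\theta|\sim 2^k$, which is exactly what fails for general $S^\mu_{1,1}$ symbols and what makes paradifferential operators bounded. Two points are glossed but harmless: the column integral in the Schur test needs the pointwise, $x$-uniform domination $|\kappa(x,z)|\le h(z)$ with $\|h\|_{L^1}\lesssim 2^{k\mu}$ (not merely the $L^1$ bound for each fixed $x$), which your integration by parts does deliver uniformly in $x$; and the low-frequency blocks should be summed against $\|\varphi_k(D)u\|_{L^2}\lesssim\|u\|_{H^s}$ over the finitely many $k\le k_0(\varepsilon_2)$, as you indicate.
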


\medbreak
\begin{prop}
If   $\psi_1$ and $\psi_2$ are two  admissible cut-off, then 
for all $a \in \Gamma^\mu_{0}$ and $s\in \RR$,   
$T^{\psi_1}_a - T^{\psi_2}_a  $ is a bounded operator from $ H^s(\RR^d)$ to 
$H^{ s-\mu +1}(\RR^d)$. 
\end{prop}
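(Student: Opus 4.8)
Write $\theta:=\psi_1-\psi_2$ and let $r_a$ be the symbol of $T^{\psi_1}_a-T^{\psi_2}_a$, so that on the $x$-Fourier side $\widehat{r_a}(\eta,\xi)=\theta(\eta,\xi)\,\widehat a(\eta,\xi)$. The first step is to record the support and size of $\theta$. Both cut-offs equal $1$ for $|\eta|\le\varepsilon_1(1+|\xi|)$ and vanish for $|\eta|\ge\varepsilon_2(1+|\xi|)$, where I take $\varepsilon_1$ to be the smaller of the two lower thresholds and $\varepsilon_2$ the larger of the two upper ones, so that $0<\varepsilon_1<\varepsilon_2<1$. Hence $\theta$ is supported in the region
\[
\varepsilon_1(1+|\xi|)\ \le\ |\eta|\ \le\ \varepsilon_2(1+|\xi|),
\]
and by \eqref{p5.3} it satisfies $|\partial_\eta^\alpha\partial_\xi^\beta\theta|\le C_{\alpha\beta}(1+|\xi|)^{-|\alpha|-|\beta|}$. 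Two features matter: on the support $|\eta|$ is comparable to $(1+|\xi|)$ and stays bounded away from $0$, and the strict inequality $\varepsilon_2<1$ forces the frequency shift $|\eta|$ to be smaller than the frequency itself.

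The heart of the matter is to convert the lower bound $|\eta|\ge\varepsilon_1(1+|\xi|)$ into a gain of one derivative. Since $\sum_j\eta_j^2/|\eta|^2=1$ on the support of $\theta$, I factor
\[
\theta(\eta,\xi)\ =\ \sum_{j=1}^d m_j(\eta,\xi)\,\eta_j,\qquad m_j(\eta,\xi):=\frac{\eta_j}{|\eta|^2}\,\theta(\eta,\xi),
\]
where each $m_j$ is smooth because $\mathrm{supp}\,\theta$ avoids $\eta=0$. As $|\eta|\sim(1+|\xi|)$ there, one checks $|\partial_\eta^\alpha\partial_\xi^\beta m_j|\le C(1+|\xi|)^{-1-|\alpha|-|\beta|}$, with $m_j$ still supported in $|\eta|\le\varepsilon_2(1+|\xi|)$: it is a generalized cut-off of order $-1$. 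Using $\eta_j\widehat a=-i\,\widehat{\partial_{x_j}a}$ gives the symbol identity $r_a=-i\sum_j b_j$ with $\widehat{b_j}(\eta,\xi):=m_j(\eta,\xi)\,\widehat{\partial_{x_j}a}(\eta,\xi)$, equivalently $b_j(\cdot,\xi)=G^{m_j}(\cdot,\xi)\ast_x(\partial_{x_j}a)(\cdot,\xi)$. Here one uses that $a$ carries one $x$-derivative lying in $\Gamma^\mu_0$, i.e. $\partial_{x_j}a\in\Gamma^\mu_0$; together with $\|G^{m_j}(\cdot,\xi)\|_{L^1}\lesssim(1+|\xi|)^{-1}$ this yields $\|\partial_\xi^\beta b_j(\cdot,\xi)\|_{L^\infty}\le C(1+|\xi|)^{\mu-1-|\beta|}$, so $b_j\in\Gamma^{\mu-1}_0$, while $\widehat{b_j}(\cdot,\xi)$ remains supported in $|\eta|\le\varepsilon_2(1+|\xi|)$. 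This $x$-regularity is exactly what produces the gain and cannot be dispensed with: for symbols merely bounded in $x$ the factor $\partial_{x_j}a$ is uncontrolled and the difference gains nothing.

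It remains to bound each $b_j$. Its symbol lies in $\Gamma^{\mu-1}_0$ and is spectrally localized in $x$ to $|\eta|\le\varepsilon_2(1+|\xi|)$ with $\varepsilon_2<1$, which is precisely the structure governed by the proof of Proposition~\ref{action}; applied at order $\mu-1$ this gives boundedness $H^s(\RR^d)\to H^{s-\mu+1}(\RR^d)$ for every $s\in\RR$, and summing over $j$ yields the claim. The step I expect to be the main obstacle is securing this boundedness for all real $s$, in particular $s\le 0$. The mechanism is the Littlewood--Paley decomposition $b_j(x,D)u=\sum_k b_j(x,D)\Delta_k u$: because $\varepsilon_2<1$, the shift $|\eta|<(1+|\xi|)$ keeps the $x$-spectrum of $b_j(x,D)\Delta_k u$ in an annulus $|\zeta|\sim 2^k$ rather than in a full ball, so the pieces are almost orthogonal and the blockwise bounds $\|b_j(x,D)\Delta_k u\|_{L^2}\lesssim 2^{k(\mu-1)}\|\Delta_k u\|_{L^2}$ sum cleanly to the desired estimate. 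The factor $2^{-k}$ relative to the order-$\mu$ bound is exactly the gain supplied by the order-$(-1)$ multiplier $m_j$.
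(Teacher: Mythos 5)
The paper never proves this proposition: Section 5 is a handbook that defers to \cite{Bony} and \cite{Met2}, so your proposal can only be measured against the standard argument in those references. Your proof essentially \emph{is} that argument, correctly executed: the difference symbol has $x$-spectrum in the annulus $\varepsilon_1(1+|\xi|)\le|\eta|\le\varepsilon_2(1+|\xi|)$, the lower bound on $|\eta|$ is converted into a gain of one order at the price of one $x$-derivative of $a$ (your factorization $\theta=\sum_j m_j\eta_j$ is a clean implementation of the reverse Bernstein inequality used in \cite{Met2}), and the resulting symbols of order $\mu-1$ with spectrum in $\{|\eta|\le\varepsilon_2(1+|\xi|)\}$, $\varepsilon_2<1$, define operators bounded $H^s\to H^{s-\mu+1}$ for every $s\in\RR$ --- which is exactly what the proof of Proposition~\ref{action} delivers, as you say.

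The substantive point is the one you flag yourself: your argument uses $\partial_{x_j}a\in\Gamma^\mu_0$, i.e. $a\in\Gamma^\mu_1$, which is strictly stronger than the printed hypothesis $a\in\Gamma^\mu_0$. You are right that this strengthening is unavoidable, because the proposition as literally stated is false. For instance, take $d=1$, $\mu=0$, $0<\delta<1$ and $a(x)=\sum_{j\ge1}2^{-\delta j}\cos\bigl(\tfrac32\,2^jx\bigr)\in L^\infty=\Gamma^0_0$ (H\"older continuous but not Lipschitz), with the two cut-offs $\psi_N,\psi_{N'}$ of \eqref{p5.4}, $N'>N$. Then $(T^{\psi_N}_a-T^{\psi_{N'}}_a)u=\sum_k\bigl((S_{k-N}-S_{k-N'})a\bigr)\Delta_k u$, and the $k$-th coefficient has $L^\infty$ norm $\approx 2^{-\delta k}$, not $O(2^{-k})$; testing on a single spectral block $u_k$ at frequency $\sim 2^k$ produces output in an annulus $|\zeta|\sim2^k$ of $L^2$ norm $\gtrsim 2^{-\delta k}\|u_k\|_{L^2}$, so the difference has order exactly $-\delta$ and is not bounded from $H^s$ to $H^{s+1}$. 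This matches the general statement in \cite{Met2} (gain equal to the H\"older regularity in $x$, hence gain $1$ only for Lipschitz symbols); the printed $\Gamma^\mu_0$ is evidently a slip for $\Gamma^\mu_1$, and a harmless one for this paper, since the proposition is only invoked for symbols such as $B\in\widetilde\Gamma^1_1$ and $V\in\widetilde\Gamma^{-1}_1$, which are Lipschitz in $x$. In short: your proof is the right proof of the statement the paper actually needs, and your diagnosis of the hypothesis is correct.
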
 

\begin{rem}
\textup{This proposition implies   that the choice of $\psi $ is essentially
irrelevant in our analysis, as in
\cite{Bony}. To simplify notation, make a definite choice of $\psi$,
for instance $\psi =\psi_N$ with 
$N = 3$ as in \eqref{p5.4} and  use the notation $T_a$   for $T^\psi_a$. }
\end{rem}

 \begin{prop} [Symbolic calculus]
\label{symbcal}
Consider  $a \in \Gamma^\mu_{1} $ and $b \in \Gamma^{\mu'}_{1}$.
Then $a b \in \Gamma^{\mu+\mu'}_{1}$ and for all $s \in \RR$, 
$ T_a \circ T_b - T_{a b} $    
is   bounded   from $ H^s(\RR^d)$ to 
$H^{ s-\mu- \mu' +1}(\RR^d)$. 

If $b$ is independent of $x  $, then
$ T_a \circ T_b = T_{a b}$\, .
\end{prop}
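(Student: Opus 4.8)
The plan is to treat the three assertions separately, leaving the one-order gain in the composition for last, as it is the only genuinely analytic point; the product statement and the exact Fourier-multiplier identity are algebraic. The membership $ab\in\Gamma^{\mu+\mu'}_1$ is immediate: the Leibniz rule gives $\partial_\xi^\alpha(ab)=\sum_{\beta\le\alpha}\binom{\alpha}{\beta}(\partial_\xi^\beta a)(\partial_\xi^{\alpha-\beta}b)$, and by the estimates \eqref{estimsymb3} for $a$ and $b$ each summand is $\lesssim(1+|\xi|)^{\mu-|\beta|}(1+|\xi|)^{\mu'-|\alpha-\beta|}=(1+|\xi|)^{\mu+\mu'-|\alpha|}$, so $ab\in\Gamma^{\mu+\mu'}_0$. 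Since $\partial_{x_j}(ab)=(\partial_{x_j}a)b+a(\partial_{x_j}b)$ with $\partial_{x_j}a\in\Gamma^\mu_0$ and $\partial_{x_j}b\in\Gamma^{\mu'}_0$, the same computation gives $\partial_{x_j}(ab)\in\Gamma^{\mu+\mu'}_0$, which is precisely membership in $\Gamma^{\mu+\mu'}_1$.

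For the last sentence, suppose $b=b(\xi)$ is independent of $x$. Then $T_b=b(D)$ by Proposition~\ref{action}(i), and composing the Fourier multiplier $b(D)$ on the right of the integral \eqref{p5.10} merely multiplies the symbol, so $T_a\circ b(D)=\mathrm{Op}\big(\sigma_a^\psi(x,\xi)\,b(\xi)\big)$. On the $x$-Fourier side this symbol equals $\psi(\eta,\xi)\widehat a(\eta,\xi)b(\xi)=\psi(\eta,\xi)\widehat{ab}(\eta,\xi)=\widehat{\sigma_{ab}^\psi}(\eta,\xi)$, using that $b$ carries no $x$-frequency; hence $T_a\circ b(D)=T_{ab}$ exactly, with no remainder.

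The heart of the matter is the general composition. At the level of symbols the formal composition expansion reads $\sigma_a^\psi\#\sigma_b^\psi\sim\sigma_a^\psi\sigma_b^\psi+\sum_j\partial_{\xi_j}\sigma_a^\psi\,D_{x_j}\sigma_b^\psi+\cdots$. Its leading term $\sigma_a^\psi\sigma_b^\psi$ differs from $\sigma_{ab}^\psi$ only by a symbol of order $\mu+\mu'-1$ (a paraproduct-type mismatch between low-frequency truncations at comparable scales), so $\mathrm{Op}(\sigma_a^\psi\sigma_b^\psi)=T_{ab}$ modulo one order. The first correction is also of order $\mu+\mu'-1$, because $\partial_{\xi_j}\sigma_a^\psi$ gains one order in $\xi$ while $D_{x_j}\sigma_b^\psi$ stays of order $\mu'$ — this is exactly where the hypothesis $b\in\Gamma^{\mu'}_1$ enters. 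As only one $x$-derivative of the symbols is controlled, the expansion terminates at first order with a remainder of the same order $\mu+\mu'-1$, which formally accounts for the gain.

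To make this rigorous I would realise the operators through the dyadic structure of the fixed cut-off $\psi_N$ of \eqref{p5.4}. Writing $\Delta_k=\varphi_k(D_x)$ and $S_k=\chi_k(D_x)$, one has $T_a=\sum_kA_k$ with $A_k=\mathrm{Op}\big((S_{k-N}a)(x,\xi)\varphi_k(\xi)\big)$, and likewise $T_b=\sum_\ell B_\ell$. Each block is spectrally localised: $\varphi_k$ restricts $A_k$ to input $x$-frequencies $\sim2^k$, while $S_{k-N}a$ has $x$-frequencies $\lesssim2^{k-N}$, so $A_k$ outputs $x$-frequency $\sim2^k$; hence $A_kB_\ell=0$ unless $|k-\ell|\le N_0$ for a fixed $N_0$, and the composition reduces to a near-diagonal sum of Littlewood--Paley-localised blocks, to which Proposition~\ref{action} and an almost-orthogonality argument apply. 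The \textbf{main obstacle} lies precisely here: a naive symbolic bound only places the remainder in $S^{\mu+\mu'-1}_{1,1}$, a class that is \emph{not} $H^s$-bounded in general, so the one-order gain cannot be read off at the symbol level alone. One must instead estimate the block remainders $A_kB_\ell-\mathrm{Op}(\sigma_k\tau_\ell)$ together with the resummation mismatch uniformly in $k$ and reassemble them via the near-diagonal localisation; extracting the gain hand in hand with the Littlewood--Paley structure, rather than from the symbol, is the delicate step.
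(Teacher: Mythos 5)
Your handling of the two algebraic assertions is correct and complete: the Leibniz-rule check gives $ab\in\Gamma^{\mu+\mu'}_1$, and the Fourier-side identity $\widehat{\sigma^\psi_{ab}}(\eta,\xi)=\psi(\eta,\xi)\,\widehat a(\eta,\xi)\,b(\xi)=\widehat{\sigma^\psi_a}(\eta,\xi)\,b(\xi)$ does prove $T_a\circ T_b=T_{ab}$ exactly when $b$ is independent of $x$. For calibration: the paper itself offers no proof of this proposition -- Section 5 is a handbook whose statements are quoted from \cite{Bony}, \cite{Meyer}, \cite{Met2} -- so the only question is whether your argument stands on its own.

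It does not, because the one substantive claim -- the gain of one derivative for $T_a\circ T_b-T_{ab}$ -- is announced rather than proved. Everything after ``To make this rigorous'' is a plan: you never write the exact composed symbol, never estimate the first-order Taylor remainder, never bound the diagonal blocks $A_kB_\ell-\mathrm{Op}(\sigma_k\tau_\ell)$ or the resummation mismatch with $T_{ab}$, and you end by labelling precisely these estimates ``the delicate step''. Moreover, the obstacle you flag -- that a remainder in $S^{\mu+\mu'-1}_{1,1}$ need not be $H^s$-bounded -- is resolved in the standard treatments by an idea absent from your sketch: the spectral localization \eqref{p5.2} is stable under composition. Since $\widehat{\sigma^\psi_a}(\cdot,\xi)$ and $\widehat{\sigma^\psi_b}(\cdot,\xi)$ are supported in $\{|\eta|\le\varepsilon_2(1+|\xi|)\}$, the exact composed symbol, its principal part $\sigma^\psi_a\sigma^\psi_b$, the Taylor remainder, and the difference $\sigma^\psi_a\sigma^\psi_b-\sigma^\psi_{ab}$ all have $x$-spectrum in $\{|\eta|\le C\varepsilon_2(1+|\xi|)\}$; taking $\varepsilon_2$ small (using the proposition that changing the admissible cut-off only affects terms of one order lower), they lie in the class of spectrally localized $S^{m}_{1,1}$ symbols, which \emph{is} bounded from $H^s$ to $H^{s-m}$ for every $s$ -- this is the lemma underlying Proposition~\ref{action}, and it needs no $x$-regularity of the symbol. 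Granting that lemma, two estimates finish the proof: the Taylor remainder has order $\mu+\mu'-1$ because $\partial_\xi\sigma^\psi_a$ has order $\mu-1$ while $\partial_x\sigma^\psi_b$ has order $\mu'$ (where $b\in\Gamma^{\mu'}_1$ enters), and $\sigma^\psi_a\sigma^\psi_b-\sigma^\psi_{ab}$ has order $\mu+\mu'-1$ because $|\sigma^\psi_a-a|\lesssim(1+|\xi|)^{\mu-1}$ and $|\sigma^\psi_b-b|\lesssim(1+|\xi|)^{\mu'-1}$, which is where $a\in\Gamma^{\mu}_1$ (and not merely $\Gamma^\mu_0$) enters -- a hypothesis your sketch never uses, since you invoke only the regularity of $b$. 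Your near-diagonal block scheme could be pushed to the same conclusion, but as written it amounts to re-deriving that lemma without carrying out any of its estimates, so the proposal stops exactly where the proposition begins.
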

\noindent
These results  extend  to matrix valued symbols and operators.

\medbreak
\begin{prop} [Adjoints]
\label{adjoint}
Consider a matrix valued symbol $a \in \Gamma^\mu_{1}$. Denote by
$(T_a) ^*$ the adjoint operator of $T_a$ in
$L^2(\RR^{d})$  and
by $a^*(x, \xi) $ the adjoint of the matrix $a (x, \xi)$.  Then
$ (T_a) ^* - T_{a^*}  $   is  bounded  from $ H^s(\RR^d)$ to 
$H^{ s-\mu +1}(\RR^d)$. 
\end{prop}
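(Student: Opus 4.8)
The plan is to compute the Fourier-side kernels of $(T_a)^*$ and of $T_{a^*}$ and to show that they differ by the kernel of a paradifferential operator of order $\mu-1$. I treat the matrix case directly, writing $M^*$ for the conjugate transpose of a matrix $M$. With the admissible cut-off fixed and $\widehat{\sigma_a}(\eta,\xi)=\psi(\eta,\xi)\widehat a(\eta,\xi)$ (Fourier transform in $x$), one has $\widehat{T_a u}(\eta)=(2\pi)^{-n}\int \psi(\eta-\xi,\xi)\,\widehat a(\eta-\xi,\xi)\,\widehat u(\xi)\,d\xi$, so $T_a$ is the integral operator in Fourier variables with kernel $K(\eta,\xi)=(2\pi)^{-n}\psi(\eta-\xi,\xi)\widehat a(\eta-\xi,\xi)$. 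Its $L^2$ adjoint has kernel $K(\xi,\eta)^*=(2\pi)^{-n}\psi(\xi-\eta,\eta)\widehat a(\xi-\eta,\eta)^*$, while, using $\widehat{a^*}(\theta,\xi)=\widehat a(-\theta,\xi)^*$, the operator $T_{a^*}$ has kernel $(2\pi)^{-n}\psi(\eta-\xi,\xi)\widehat a(\xi-\eta,\xi)^*$. Setting $\theta=\xi-\eta$ for the output-frequency shift, the kernels of $(T_a)^*$ and $T_{a^*}$ are respectively $(2\pi)^{-n}\psi(\theta,\eta)\widehat a(\theta,\eta)^*$ and $(2\pi)^{-n}\psi(-\theta,\xi)\widehat a(\theta,\xi)^*$: both carry the same matrix factor $\widehat a(\theta,\cdot)^*$, evaluated at the nearby points $\eta$ and $\xi$, together with the cut-offs $\psi(\theta,\eta)$ and $\psi(-\theta,\xi)$.

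Because two admissible cut-offs define the same paradifferential operator up to order $\mu-1$ (the proposition above on independence of the cut-off), and taking adjoints preserves this gain, I may replace the cut-offs in $(T_a)^*$ and in $T_{a^*}$ by a common one at the cost of an operator bounded from $H^s$ to $H^{s-\mu+1}$, which is already within the claimed error. This reduces the difference of kernels, modulo an order $\mu-1$ operator, to $(2\pi)^{-n}\psi(\theta,\xi)\,[\widehat a(\theta,\eta)^*-\widehat a(\theta,\xi)^*]$. On the support of $\psi$ one has $|\theta|\le\varepsilon_2(1+|\xi|)$ and $|\xi|\sim|\eta|$, so $\theta$ is small relative to the base frequency and a first order Taylor expansion of $\widehat a(\theta,\cdot)^*$ in its second slot about $\xi$ is legitimate: the leading terms cancel and what remains contains one factor $\theta_j$ together with one $\xi$-derivative of $\widehat a$. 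Since $\theta_j$ is, up to a constant, the Fourier symbol of $\partial_{x_j}$ acting on $a$, and since $a\in\Gamma^\mu_1$ means exactly that $\partial_{x_j}a\in\Gamma^\mu_0$, while each $\partial_\xi$ lowers the order by one, the remainder is the kernel of an operator whose symbol sits in $\Gamma^{\mu-1}_0$ (equivalently in $S^{\mu-1}_{1,1}$, with the paradifferential spectral gap guaranteeing Sobolev boundedness). Proposition~\ref{action} applied to this order $\mu-1$ symbol then gives boundedness from $H^s$ to $H^{s-\mu+1}$, as claimed.

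The main obstacle is making the Taylor remainder rigorous as a symbol estimate: after expansion one is left with an integral over the interpolation parameter $r\in[0,1]$ of quantities of the form $\partial_{x_j}\partial_{\xi}a$ evaluated at the shifted point $\xi-r\theta$, and one must verify that, once integrated against the off-diagonal kernel, these reassemble into a genuine symbol obeying the $\Gamma^{\mu-1}_0$ seminorm bounds uniformly. This is precisely where the spectral localization carried by $\psi$ is indispensable, since it confines $\theta$ to the cone $|\theta|\lesssim 1+|\xi|$ and keeps $|\xi|\sim|\eta|$, so that the expansion converges in the symbol topology and one never meets the generic failure of $H^s$-boundedness for $S^\mu_{1,1}$. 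A slightly more mechanical alternative, which avoids the parameter integral altogether, is to decompose through the dyadic blocks of $\psi_N=\sum_k\chi_{k-N}\varphi_k$: one computes the exact adjoint of each block $(\chi_{k-N}(D_x)a)(x,D)\,\varphi_k(D)$, compares it with the corresponding block of $T_{a^*}$, and sums using the almost-orthogonality of the $\varphi_k(D)$ together with the frequency gap $2^{k-N}\ll 2^k$; the gain of one order again comes from the single $x$-derivative available through $a\in\Gamma^\mu_1$. The matrix-valued statement is obtained verbatim, complex conjugation being replaced throughout by conjugate transposition.
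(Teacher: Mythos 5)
The paper itself contains no proof of Proposition~\ref{adjoint}: Section~5 is a handbook that collects standard facts of paradifferential calculus and defers to the references, in particular to \cite{Met2}, for details. So the relevant comparison is with the standard argument in those references, and your proposal is essentially that argument. Your Fourier-side kernel computations are correct (including the identity $\widehat{a^*}(\theta,\xi)=\widehat a(-\theta,\xi)^*$ and the two kernels $\psi(\theta,\eta)\widehat a(\theta,\eta)^*$ and $\psi(-\theta,\xi)\widehat a(\theta,\xi)^*$ with $\theta=\xi-\eta$), the mechanism you identify is the right one (the frequency shift $\theta$ is traded for the one $x$-derivative that $a\in\Gamma^\mu_1$ supplies, and the $\xi$-derivative produces the factor $(1+|\xi|)^{-1}$, hence order $\mu-1$), and your duality remark that the cut-off-independence gain survives adjoints is correct because that gain holds for every $s\in\RR$. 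Your ``mechanical alternative'' via the dyadic blocks of $\psi_N$ with almost orthogonality is in fact the way the cited literature makes this rigorous.

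Two steps of your write-up need tightening, though neither is a wrong turn. First, the reduction to a common cut-off is not a direct application of the stated cut-off-independence proposition: changing the admissible cut-off in $(T^\psi_a)^*$ replaces $\psi(\theta,\eta)$ by $\psi'(\theta,\eta)$, still evaluated at the output frequency $\eta$, whereas you need it evaluated at $\xi$. This is repairable either by checking that $\psi'(\theta,\zeta):=\psi(\theta,\zeta+\theta)$ is itself an admissible cut-off --- which holds when $\varepsilon_2<\tfrac12$, as is the case for the paper's definite choice $\psi_N$ with $N=3$, so that $(T^{\psi'}_a)^*$ has exactly the kernel you want --- or, more simply, by applying the same Taylor mechanism to the discrepancy $\psi(\theta,\eta)-\psi(\theta,\xi)$, using $|\partial_\xi\psi|\lesssim(1+|\xi|)^{-1}$ together with $\theta_j\widehat a(\theta,\cdot)=\widehat{D_{x_j}a}(\theta,\cdot)$. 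Second, the final conclusion cannot literally be drawn from Proposition~\ref{action}: the Taylor remainder, with its interpolation integral and shifted arguments $\xi-r\theta$, is not of the form $T^\psi_b$ for any $b\in\Gamma^{\mu-1}_0$, so ``the symbol sits in $\Gamma^{\mu-1}_0$'' is not quite the right claim. What is true, and what you in substance say, is that it is an $S^{\mu-1}_{1,1}$ symbol satisfying the spectral support condition $|\theta|\le\varepsilon(1+|\xi|)$ with $\varepsilon<1$; one must then invoke the boundedness theorem for that class (the same theorem that underlies Proposition~\ref{action}), and verifying its seminorms uniformly in the interpolation parameter --- or bypassing this by your dyadic decomposition --- is the real technical content, which you correctly flag as the main obstacle but do not carry out. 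With those two repairs, your argument is the standard proof.
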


\begin{rem}
\label{rem56}
 \textup{The norm of the operators acting in the indicated Sobolev spaces
 are uniformly bounded when the symbols $a$ and $b$ 
 belong to  bounded subsets of the   symbol classes. }
 \end{rem}

Bounded functions of $x  $ are particular examples of
symbols in the class $\Gamma^0_0$, independent of the frequency variables
$\zeta $. In this case, $T_a$ is called
a {\it paraproduct} in \cite{Bony}.

\begin{prop} [Paralinearization]
  \label{paralin}
 There is a constant $C$
such that for all $a \in W^{1, \infty}  $ and all
$u \in L^2(\RR^d)   $
$$
\begin{aligned}
  \big\| a \D_{x_j} u  - T_a \D_{x_j} u \big\|_{L^2} 
  \ \le\
    C  
\Vert a\Vert_{W^{1, \infty }} \Vert u \Vert_{L^2}\, .
\end{aligned}
$$
 \end{prop}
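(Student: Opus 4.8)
The plan is to trade the derivative that sits on $u$ for one acting on the whole expression, where it is harmless, plus an error that is a genuine zeroth order product. Writing $\partial_j=\partial_{x_j}$ and using that $\partial_j$ is a Fourier multiplier (so that the Leibniz rule for paraproducts, immediate from the definition \eqref{p5.10}, gives $T_a\partial_j=\partial_j T_a-T_{\partial_j a}$), one has the identity
$$
a\,\partial_j u-T_a\,\partial_j u \;=\; \partial_j\big(a u-T_a u\big)\;-\;\big((\partial_j a)\,u-T_{\partial_j a}u\big).
$$
This is the heart of the argument: the troublesome factor $\partial_j u\in H^{-1}$ is replaced by $u\in L^2$ at the cost of one outer derivative and one zeroth order term. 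I would first dispose of the last bracket. Since $a\in W^{1,\infty}$ we have $\partial_j a\in L^\infty$, so $(\partial_j a)u\in L^2$ with $\|(\partial_j a)u\|_{L^2}\le\|\partial_j a\|_{L^\infty}\|u\|_{L^2}$; and $\partial_j a$, viewed as a symbol independent of $\xi$, lies in $\Gamma^0_0$, so Proposition~\ref{actionpara} together with Remark~\ref{rem56} give $\|T_{\partial_j a}u\|_{L^2}\lesssim\|\partial_j a\|_{L^\infty}\|u\|_{L^2}$. Hence the last bracket is $O(\|a\|_{W^{1,\infty}}\|u\|_{L^2})$ in $L^2$.

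It then remains to prove the paralinearization estimate in the favourable form $\|au-T_au\|_{H^1}\lesssim\|a\|_{W^{1,\infty}}\|u\|_{L^2}$, after which $\|\partial_j(au-T_au)\|_{L^2}$ is controlled. With dyadic blocks $\Delta_k$ and partial sums $S_k$ built from the $\varphi_k,\chi_k$ of \eqref{p5.7}, Bony's decomposition gives $au-T_au=\Pi(u,a)+R(a,u)$, where $\Pi(u,a)=\sum_k (S_{k-N}u)\,\Delta_k a$ and $R(a,u)=\sum_k\Delta_k a\,\widetilde\Delta_k u$ is the diagonal remainder. The remainder is elementary: its $k$-th term is spectrally supported in a ball of radius $\sim 2^k$, hence only feeds output frequencies $\lesssim 2^k$; bounding $\|\Delta_k a\|_{L^\infty}\lesssim 2^{-k}\|\nabla a\|_{L^\infty}$ and projecting onto the output block $\Delta_q$ leaves a discrete convolution against the kernel $2^{q-k}\mathbf 1_{k\ge q}$, which is summable, so Young's inequality yields $\|R(a,u)\|_{H^1}\lesssim\|\nabla a\|_{L^\infty}\|u\|_{L^2}$.

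The genuine obstacle is the term $\Pi(u,a)\in H^1$. Its blocks are localized in annuli $|\xi|\sim2^k$, so by orthogonality $\|\Pi(u,a)\|_{H^1}^2\approx\sum_k 2^{2k}\|(S_{k-N}u)\Delta_k a\|_{L^2}^2$, and the naive bound $\|(S_{k-N}u)\Delta_k a\|_{L^2}\le\|u\|_{L^2}\|\Delta_k a\|_{L^\infty}\lesssim 2^{-k}\|\nabla a\|_{L^\infty}\|u\|_{L^2}$ loses: after the factor $2^{2k}$ the series diverges. One must exploit almost-orthogonality in $x$ as well. Writing $\sum_k 2^{2k}|(S_{k-N}u)\Delta_k a|^2\le (Mu)^2\sum_k 2^{2k}|\Delta_k a|^2$ with $M$ the Hardy--Littlewood maximal operator, the key point is that the square function $\sum_k 2^{2k}|\Delta_k a|^2$ of the bounded field $\nabla a$, while not pointwise finite, defines a Carleson measure of mass $\lesssim\|\nabla a\|_{L^\infty}^2$; the Carleson embedding theorem (equivalently the Coifman--Meyer paraproduct bound $\|\Pi(u,b)\|_{L^2}\lesssim\|b\|_{BMO}\|u\|_{L^2}$ applied to $b=\nabla a$, via $L^\infty\subset BMO$) then gives $\int (Mu)^2\sum_k2^{2k}|\Delta_k a|^2\lesssim\|\nabla a\|_{L^\infty}^2\|u\|_{L^2}^2$. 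This borderline paraproduct step — where only $\nabla a\in L^\infty$ is at hand, rather than any positive amount of Besov regularity — is the part I expect to be the crux; everything else is bookkeeping with Littlewood--Paley orthogonality and the mapping properties of $T$ already recorded above.
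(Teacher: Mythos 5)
The paper never proves this proposition: Section~5 is a handbook that quotes it from the paradifferential literature (\cite{Bony}, \cite{Met2}), so the comparison can only be with the standard proofs there --- and yours is essentially that standard proof. Your preliminary reductions are all correct: the Leibniz identity $T_a\partial_j=\partial_jT_a-T_{\partial_j a}$ does follow at once from $\sigma_{\partial_j a}=\partial_{x_j}\sigma_a$; the bracket $(\partial_ja)u-T_{\partial_ja}u$ is bounded in $L^2$ by $\|a\|_{W^{1,\infty}}\|u\|_{L^2}$ simply because each of its two terms is (no Bony decomposition needed there); and your treatment of the remainder $R(a,u)$ in $H^1$ (Bernstein bound $\|\Delta_k a\|_{L^\infty}\lesssim 2^{-k}\|\nabla a\|_{L^\infty}$, ball spectral support, Young's inequality in $\ell^2$ against the kernel $2^{q-k}\mathbf{1}_{k\ge q-C}$) is correct, up to treating the block $k=0$ with $\|a\|_{L^\infty}$. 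Your diagnosis of the crux is also accurate: the borderline paraproduct $\sum_k (S_{k-N}u)\Delta_k a$ in $H^1$ cannot be obtained from the blockwise bounds $2^k\|\Delta_ka\|_{L^\infty}\lesssim\|\nabla a\|_{L^\infty}$ alone (that is, from $\nabla a\in B^0_{\infty,\infty}$); some Calder\'on--Zygmund input --- Carleson embedding, equivalently the Coifman--Meyer bound $\|\pi(u,b)\|_{L^2}\lesssim\|b\|_{BMO}\|u\|_{L^2}$ with $b=\nabla a$ and $L^\infty\subset BMO$ --- is genuinely needed at this point, and that is exactly how the cited references proceed.

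One step is misstated, although your parenthetical shows you know the correct statement. The chain ``$\sum_k 2^{2k}|(S_{k-N}u)\Delta_k a|^2\le (Mu)^2\sum_k 2^{2k}|\Delta_k a|^2$, and then Carleson embedding bounds $\int (Mu)^2\sum_k 2^{2k}|\Delta_k a|^2$'' is not valid as written: once you decouple $(Mu)^2$ from the $k$-th layer of the measure, the right-hand side can be $+\infty$, because the $x$-marginal $\sum_k 2^{2k}|\Delta_k a(x)|^2\,dx$ of a Carleson measure is in general an infinite measure, and Carleson's theorem says nothing about integrals against this marginal. What the embedding theorem controls is the \emph{coupled} quantity $\sum_k\int |S_{k-N}u(x)|^2\, 2^{2k}|\Delta_k a(x)|^2\,dx$, where it is essential that $S_{k-N}u$ is an average of $u$ at the spatial scale $2^{-k}$ matched to the $k$-th layer of the measure $\sum_k 2^{2k}|\Delta_k a(x)|^2\,dx\otimes\delta_{t=2^{-k}}$; the Carleson property of that measure follows from writing $2^k\Delta_k a=\theta_k(D)\nabla a$ with $\theta_k$ uniformly bounded mean-zero bumps supported on $|\xi|\sim 2^k$, and $\nabla a\in L^\infty\subset BMO$. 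So the repair is simply to delete the maximal-function factorization and apply the embedding theorem (or, equivalently, Coifman--Meyer with $b=\nabla a$) directly to the coupled sum, together with almost-orthogonality of the annular blocks, for which only the inequality $\|\Pi(u,a)\|_{H^1}^2\lesssim\sum_k 2^{2k}\|(S_{k-N}u)\Delta_k a\|_{L^2}^2$ (not the stated equivalence) is needed. With that local correction your proof is complete and coincides with the standard one.
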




\subsection{The time dependent case}

In the sequel we  consider functions of $ (t, x) \in [0, T] \times
\RR^n$, considered as functions
of $t $ with values in various  spaces of functions of $x$. In
particular, denote by
$T_a$ the operator acting on $u$ so that for each fixed $t$, 
$ (T_a u)(t) = T_{a(t)} u(t)$. 
\begin{equation}
\label{p5.11}
T_a  u ( t,  x) \, := \, \frac{1}{(2\pi)^n} 
\int e^{ i \xi \cdot x}\, \sigma_a (t, x, \xi) \, \widehat u(\xi) \, d
\xi\, . 
\end{equation}
with 
\begin{equation} 
\sigma_a (t, x, \xi) \, := \, 
\int G (x - y, \xi) \, a(t, y , \xi )\,  dy  
\end{equation}
This definition shows   that   formally
\begin{equation}
\label{comL}
[ \D_t ,  T_a ] \  =\ T_{\D_t a}\,. 
\end{equation}
This yields easy estimates when $\D_t a \in L^\infty$. However, 
if we want to keep the lower bound $s > 1 +\frac{d}{2}$ in  Theorem~\ref{th2.5}
this condition 
need not be satisfied, 
since in the equation \eqref{eq1.1}, $\D_t$ has the 
weight of two spatial derivatives. This is why we 
introduce a slight extension. 

Using the Littlewood-Paley decomposition
\begin{equation}
u = \sum_{k= 0}^{+ \infty} \Delta_k u , \qquad  \mathrm{with} \quad    \widehat {\Delta_k u} := 
\varphi_k \hat u\,,
\end{equation}
as in \eqref{p5.7},  the Besov space $B^{-1, \infty}_\infty$  is defined as the space of 
tempered distributions $u$ such that 
\begin{equation}
\big\|  u   \big\|_{B^{-1, \infty}_\infty} 
\ =\
 \sup_k \  2^{- k} \big\|  \Delta_k u \big\|_{L^\infty}   \ <\
  + \infty. 
\end{equation} 
This space occurs in our analysis because of  the  the following embedding.

\begin{lem}
\label{lememb}
Functions $u \in H^{s}$ belong to  $B^{-1, \infty}_\infty$  when 
$s >  \frac{d}{2} -1$. 
\end{lem}

\noindent
In the spirit of Definition~\ref{def31}, introduce the following notation.

\begin{defi}     
\label{def31a}
For $\mu \in \RR$, let 
  $\Gamma^\mu_{-1} $ denote  the space of  distributions 
$a(x, \xi) $ on $\RR^d \times \RR^{d}  $
which are $C^\infty$ with respect to $\xi$ with values in $B^{-1, \infty}_\infty $
 and such that for all
$\alpha \in \NN^d$ there is a constant $C_\alpha$ such that
\begin{equation}
\label{estimsymb3b}
\forall \xi  \, , \quad  \big\| 
 \partial_{\xi  }^\alpha a ( \cdot , \xi ) \big\Vert_{B^{-1, \infty}_\infty}   \, \le \,
C_\alpha \,
        (1 +   \vert  \xi  \vert )   ^ {  \mu -  \vert
\alpha \vert   }  \, .
\end{equation}

\end{defi}

\begin{defi} [Time dependent symbols]  Let $\mu \in \RR$ and $T > 0$. 
\label{def31b}

i) $\widetilde \Gamma^\mu_0$ denotes the space of locally continuous  functions
$a(t, x, \xi) $ on $[0, T] \times \RR^d \times \RR^{d}  $
which are $C^\infty$ with respect to $\xi$ and such that 
the family $\{ a (t, \ \cdot \, , \, \cdot \,) ; t \in [0, T] \}$ is bounded in 
$\Gamma^\mu_0$. 

ii)  $\widetilde \Gamma^\mu_1$ denotes the space of symbols $a \in \widetilde \Gamma^\mu_0$
such that 

\qquad  -  the family $\{ a (t, \ \cdot \, , \, \cdot \,) ; t \in [0, T] \}$ is bounded in 
$\Gamma^\mu_1$ 

\qquad - the  family $\{ \D_t a (t, \ \cdot \, , \, \cdot \,) ; t \in [0, T] \}$ is bounded in 
$\Gamma^\mu_{-1}$ . 

\end{defi}

For $a \in \widetilde \Gamma^\mu_0$, the operator $T_a$ is defined by \eqref{p5.11}
and the Propositions~\ref{action}, \ref{symbcal}, \ref{paralin} apply for fixed $t$, 
yielding estimates that are uniform in $t$ (see Remark~\ref{rem56}).  
The commutation with $\D_t$ is treated as follows. 

\begin{prop}
\label{commdt}
For $a \in \widetilde \Gamma^\mu_1$ , the commutator 
$[\D_t, T_a] $ maps $C^0([0, T]; H^s)$ to  $C^0([0, T]; H^{s-\mu +1} )$ 
and there is a constant $C$ such that for all $t \in [0, T]$ 
\begin{equation}
\big\| [\D_t , T_a ] u(t) \big\|_{H^{s - \mu - 1} }  \le   C \| u \|_{H^s}.  
\end{equation}
Moreover, the constant $C$ depends only on the supremum for 
$t \in [0, T]$ of a finite number of semi-norms 
\begin{equation}
\label{p5.18} 
\sup_{\xi}  
  (1 +   \vert  \xi  \vert )   ^ {    \vert
\alpha \vert  - \mu  } \big\| 
 \partial_{\xi  }^\alpha a ( \cdot , \xi ) \big\Vert_{B^{-1, \infty}_\infty}    
        \, .
\end{equation}
\end{prop}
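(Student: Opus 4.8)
The plan is to reduce the statement to the exact commutation relation $[\D_t, T_a] = T_{\D_t a}$ and then to prove a self-contained mapping property for paradifferential operators whose symbols lie in the class $\Gamma^\mu_{-1}$ of Definition~\ref{def31a}.

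First I would justify \eqref{comL} rigorously. Since the admissible cut-off $\psi$, and hence the kernel $G$, do not depend on $t$, differentiating the defining formula \eqref{p5.11} under the integral sign gives, for $u \in C^1([0,T];H^s)$,
\[
\D_t\big(T_a u\big) \ =\ T_{\D_t a}\, u \ +\ T_a\, \D_t u ,
\]
so that $[\D_t, T_a]u = T_{\D_t a} u$ exactly. Because $a \in \widetilde\Gamma^\mu_1$, Definition~\ref{def31b} tells us that the family $\{\D_t a(t,\cdot,\cdot) ; t \in [0,T]\}$ is bounded in $\Gamma^\mu_{-1}$, the relevant semi-norms being precisely those displayed in \eqref{p5.18} (applied to $\D_t a$). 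Hence everything reduces to the following claim: \emph{for every $b \in \Gamma^\mu_{-1}$ and every $\sigma \in \RR$, $T_b$ maps $H^\sigma$ into $H^{\sigma-\mu-1}$, with operator norm bounded by finitely many of the semi-norms \eqref{p5.18}.} Taking $b = \D_t a(t)$ and $\sigma = s$ then yields the displayed inequality for each fixed $t$.

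The core of the argument is this boundedness claim, which I would establish by Littlewood-Paley analysis, mirroring the $L^\infty$-coefficient case but carefully accounting for the one extra derivative that the negative Besov index costs. Decompose $u = \sum_k \Delta_k u$ as in \eqref{p5.7}. The paradifferential construction localizes frequencies: the cut-off $\psi$ restricts the $x$-frequencies of the smoothed symbol $\sigma_b(\cdot,\xi)$ to the region $|\eta| \le \varepsilon_2(1+|\xi|)$, so $\Delta_k(T_b u)$ only couples the blocks $\Delta_{k'}u$ with $|k'-k|\le C$ to the part of $b(\cdot,\xi)$ of $x$-frequency $\lesssim 2^k$ on the shell $|\xi|\sim 2^k$. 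The defining estimate $\|\partial_\xi^\alpha b(\cdot,\xi)\|_{B^{-1,\infty}_\infty}\lesssim (1+|\xi|)^{\mu-|\alpha|}$ unwinds, via the definition of $B^{-1,\infty}_\infty$, to $\|\Delta_j b(\cdot,\xi)\|_{L^\infty}\lesssim 2^{j}(1+|\xi|)^{\mu}$; summing over the contributing blocks $2^j \lesssim 2^k$ shows that the smoothed symbol has effective $L^\infty$ size $O(2^{k(\mu+1)})$ on the shell $|\xi|\sim 2^k$. This gives the block estimate
\[
\|\Delta_k(T_b u)\|_{L^2} \ \lesssim\ 2^{k(\mu+1)} \sum_{|k'-k|\le C} \|\Delta_{k'} u\|_{L^2},
\]
and weighting by $2^{2k(\sigma-\mu-1)}$ and summing in $k$ produces $\|T_b u\|_{H^{\sigma-\mu-1}}\lesssim \|u\|_{H^\sigma}$ with a constant that is a finite combination of the semi-norms \eqref{p5.18}.

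Once the uniform estimate is in hand, the continuity statement follows by a routine approximation argument, exactly as in the time-independent calculus: the uniform operator bound provides equicontinuity, and one obtains $t\mapsto [\D_t,T_a]u(t)\in C^0([0,T];H^{s-\mu-1})$ by first treating smooth symbols and data and then passing to the limit, using the continuity of $u$ in $H^s$. The main obstacle lies in the Littlewood-Paley bookkeeping of the previous paragraph: one must verify that replacing the usual $L^\infty$ control of the coefficient by the weaker $B^{-1,\infty}_\infty$ control costs exactly one derivative and no more, which hinges on correctly matching the dyadic blocks of the symbol with those of $u$ while respecting the frequency-support restriction imposed by $\psi$. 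The embedding Lemma~\ref{lememb} is what ensures that, in the application, the symbols built from $t$-derivatives of an $H^s$ function genuinely fall into $\Gamma^\mu_{-1}$.
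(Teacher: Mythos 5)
Your proposal is correct and follows essentially the same route as the paper: reduce via $[\D_t,T_a]=T_{\D_t a}$ to bounding the operator with symbol $\D_t a\in\Gamma^\mu_{-1}$, then use the dyadic decomposition of the smoothed symbol together with the $B^{-1,\infty}_\infty$ bound ($\|\Delta_j \D_t a(\cdot,\xi)\|_{L^\infty}\lesssim 2^j(1+|\xi|)^\mu$, summed over $2^j\lesssim 2^k$ on the shell $|\xi|\sim 2^k$) to see that the loss is exactly one derivative. The only cosmetic difference is that the paper concludes by observing that $\D_t\sigma_a$ is a bounded family in $S^{\mu+1}_{1,1}$ satisfying the spectral support condition and cites the standard boundedness theorem for such symbols, whereas you re-derive that boundedness by hand through the blockwise estimate $\|\Delta_k(T_b u)\|_{L^2}\lesssim 2^{k(\mu+1)}\sum_{|k'-k|\le C}\|\Delta_{k'}u\|_{L^2}$.
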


\begin{proof} One has,
\begin{equation}
\D_t  \sigma_a (t, \,  \cdot \, , \xi ) \, = \, \sum_{k=0}^{+ \infty}  
\      S_{k- N }(D_x) \big(  \D_t a(t, \, \cdot \,   , \xi ) \big)  \varphi_k (\xi) . 
\end{equation}
where  $S_j$ is the Fourier multiplier with symbol $\chi_j$.  
Since $\D_t a (t, \cdot, \xi)$ belongs to $B^{-1, \infty}_\infty$, 
$$
\big|   S_{k- N }(D_x) \big(  \D_t a(t, \, \cdot \,   , \xi ) \big|  
\ \lesssim \
2^k (1+ | \xi |)^\mu . 
$$
On the support of $\varphi_k$,  that is $| \xi | \approx 2^k$. With similar estimates for the derivatives, 
this shows that  $\D_t a (t, \cdot, \cdot)$ is a bounded family of symbols 
in  $S^{\mu + 1}_{1,1}$. By construction, the spectral property 
that  $\D_t  \hat \sigma_a (t, \eta , \xi )  $  is supported in $|\eta| \le \eps  (1 + | \xi |)$ for 
some $\eps > 0$ is satisfied and therefore the operator 
$(\D_t a ) (t, x, D_x)$ is bounded from $H^s$ to $H^{s - \mu -1}$ for all $s$. 
\end{proof}




\end{document}